\let\over\@@over
\let\atop\@@atop
\definecolor{darkgreen}{rgb}{0.1,0.7,0.1}
\definecolor{darkred}{rgb}{0.7,0.1,0.1}
\newcommand\minus{%
  \setbox0=\hbox{-}%
  \vcenter{%
    \hrule width\wd0 height \the\fontdimen8\textfont3%
  }%
}
\def\${|\!|\!|}
\newcommand{\pa}{\mathrm{p}_a}
\newcommand{\e}{\mathrm{e}}
\newcommand{\bbE}{\mathbb{E}}
\newcommand{\cB}{\mathcal{B}}
\newcommand{\cC}{\mathcal{C}}
\newcommand{\cE}{\mathcal{E}}
\newcommand{\cM}{\mathcal{M}}
\newcommand{\cO}{\mathcal{O}}
\begin{document}

\title{A simple construction of the continuum \\ parabolic Anderson model on $\R^2$}
\author{Martin Hairer$^1$ and Cyril Labb\'e$^2$}
\institute{University of Warwick, \email{M.Hairer@Warwick.ac.uk}
\and University of Warwick, \email{C.Labbe@Warwick.ac.uk}}

\date{\today}

\maketitle

\begin{abstract}
We propose a simple construction of the solution to the continuum parabolic Anderson model on $\R^2$ 
which does not rely on any elaborate arguments and makes extensive use of the linearity of 
the equation. A logarithmic renormalisation is required to counterbalance the divergent product 
appearing in the equation. Furthermore, we use time-dependent weights in our spaces of distributions 
in order to construct the solution on the unbounded space $\R^2$.
\end{abstract}

\section{Introduction}

The goal of this note is to construct solutions to the continuous parabolic Anderson model:
\begin{equ}[e:PAM]
\tag{PAM}
\partial_t u = \Delta u + u\cdot\xi \;,\qquad u(0,x) = u_0(x)\;.
\end{equ}
Here, $u$ is a function of $t\geq 0$ and $x\in\R^2$, while $\xi$ is a white noise on $\R^2$. Notice that $\xi$ is constant in 
time, so this is quite different from the model studied for example in \cite{CM94,Khoshnevisan}. The difficulty of this problem is twofold. 
First, the product $u\cdot\xi$ is not classically well-defined since the sum of the H\"older regularities of $u$ and $\xi$ 
is slightly below $0$. Second, our space variable $x$ lies in the unbounded space $\R^2$ so that one needs to incorporate 
weights in the H\"older spaces at stake; this causes some difficulty in obtaining the fixed point argument, since one 
would a priori require a larger weight for $u\cdot\xi$ than for $u$ itself.

The first issue is handled thanks to a renormalisation procedure which, informally, consists in subtracting 
an infinite linear term from the original equation. The main trick that spares us from using elaborate 
renormalisation theories is to introduce the ``stationary'' solution $Y$ of the (additive) stochastic heat equation and 
to solve the PDE associated to $v=u e^Y$ instead of $u$. This is analogous to what was done for example 
in \cite{MR1941997,HaPaPi2013}.
The second issue is dealt with by choosing an 
appropriate time-increasing weight for the solution $u$. Roughly speaking, if $\xi$ is weighted by the 
polynomial function $\pa(x)=(1+|x|)^a$ with $a$ small, and $u_s$ is weighted by the exponential function 
$\e_s(x)=e^{s(1+|x|)}$, then $\int_0^t P_{t-s}*(u_s\cdot\xi)(x)\,ds$ requires a weight of order 
$\int_0^t \pa(x)\e_s(x)\, ds$, which is smaller than $\e_t(x)$. This argument already appears in \cite{HaPaPi2013}, 
and probably also elsewhere in the PDE literature.

The solution to the (generalised) parabolic Anderson 
model has already been constructed independently by Gubinelli, 
Imkeller and Perkowski~\cite{GubImkPer} and by Hairer \cite{Hairer2014} 
in dimension $2$ and, to some extent, by 
Hairer and Pardoux~\cite{Etienne} in dimension $3$. (The latter actually 
considers the case of dimension $1$ 
with space-time white noise, but the case of dimension $3$ with spatial 
noise has exactly the same scaling
behaviour, so the proof given there carries through \textit{mutatis mutandis}. The main 
difference is that some of the renormalisation constants
that converge to finite limits in \cite{Etienne} may diverge logarithmically.) However, in all of these results 
the space variable is restricted to a torus, which is the constraint that we lift in this note. 
The construction that we propose here is very specific 
to (\ref{e:PAM}) in dimension $2$: in particular, as it stands, it unfortunately applies 
neither to the generalised parabolic Anderson
model considered in \cite{GubImkPer,Hairer2014}, nor to the case of dimension $3$. Let us also mention the work of Hu~\cite{Hu} who considers a different equation: the usual product $u\cdot\xi$ in (\ref{e:PAM}) is replaced by the Wick product.

Let us now present the main steps of our construction. First, we introduce a mollified noise $\xi_\varepsilon := \rho_\varepsilon * \xi$, where $\rho$ is a compactly supported, even, smooth function on $\R^2$ that integrates to $1$, and $\rho_\varepsilon(x):= \varepsilon^{-2}\rho(\frac{x}{\varepsilon})$ for all $x\in\R^2$. In order to quantify the H\"older regularity of $\xi, \xi_\varepsilon$, we introduce weighted H\"older spaces of distributions, see Section \ref{SectionWeights} below for the general definitions. 
Informally speaking, given a weight $w$ and an exponent $\alpha$, $\CC_w^\alpha$ consists of those
elements of $\CC^\alpha$ that grow at most as fast as $w$ at infinity.
We have the following very simple convergence result, the proof of which is 
given on Page~\pageref{proof:xi} below.

\begin{lemma}\label{Lemma:xi}
For any given $a > 0$, let $\pa(x)=(1+|x|)^a$ on $\R^2$ as above. For every $\varepsilon, \kappa > 0$, $\xi_\varepsilon$ belongs almost surely to $\cC^{-1-\kappa}_{\pa}(\R^2)$. As $\varepsilon\downarrow 0$, $\xi_\varepsilon$ converges in probability to $\xi$ in $\cC^{-1-\kappa}_{\pa}$.
\end{lemma}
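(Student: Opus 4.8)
The plan is to establish the result in two halves: first the almost-sure membership $\xi_\epsilon\in\cC^{-1-\kappa}_{\pa}$ for fixed $\epsilon$, and then the convergence $\xi_\epsilon\to\xi$ in probability. For both, the natural route is to work through the wavelet (or Littlewood–Paley) characterisation of the weighted Hölder–Besov spaces $\cC^\alpha_w$ that is presumably set up in Section~\ref{SectionWeights}: testing $\xi_\epsilon$ against rescaled test functions $\psi^n_x(\cdot)=2^{dn/2}\psi(2^n(\cdot-x))$ at scale $2^{-n}$ and location $x$, one needs to bound $\E|\langle\xi_\epsilon,\psi^n_x\rangle|^p$ by $C_p\, 2^{-n(-1-\kappa)p}\,\pa(x)^p$ uniformly in $\epsilon$, and similarly for $\xi$ itself. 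Since $\xi$ is Gaussian and $\langle\xi_\epsilon,\psi^n_x\rangle$ is a centred Gaussian random variable, all $L^p$ norms are controlled by the $L^2$ norm, so the whole problem reduces to a second-moment computation, and then a Kolmogorov-type / Besov-embedding argument (as in the standard construction of white noise in negative Hölder spaces) upgrades these bounds to the almost-sure statement in the slightly-worse exponent $-1-\kappa$ and with the polynomial weight $\pa$.

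The key second-moment estimate is $\E|\langle\xi_\epsilon,\psi^n_x\rangle|^2=\|\rho_\epsilon*\psi^n_x\|_{L^2}^2$, using that $\xi$ is a white noise and $\rho$ is even. For $2^{-n}\gtrsim\epsilon$ the mollification barely changes $\psi^n_x$, so $\|\rho_\epsilon*\psi^n_x\|_{L^2}^2\lesssim 2^{2n}\cdot 2^{-2n}=1$... more precisely, with the $L^2$-normalisation $\|\psi^n_x\|_{L^2}=1$ one gets $\|\rho_\epsilon*\psi^n_x\|_{L^2}\le\|\psi^n_x\|_{L^2}\|\rho_\epsilon\|_{L^1}=1$ in all regimes, which after converting back to the scaling convention of the space gives the regularity $-1$ (in dimension $2$, $-d/2=-1$), and the loss of $\kappa$ is the usual price for passing from the Besov/Kolmogorov bound to a genuine Hölder-type statement. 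The weight enters because $\psi$ has compact support: $\rho_\epsilon*\psi^n_x$ is supported in a ball of radius $O(1)$ around $x$ (uniformly for $\epsilon,2^{-n}\le1$), so the bound localises near $x$, and since any such bound is trivially $\lesssim \pa(x)$, the weighted norm is controlled — one should also sum/integrate over $x$ against the weight with a small polynomial tail, which converges. The point is that \emph{no} genuine growth of $\xi$ is being used here; the weight $\pa$ is present only to make the countable supremum over dyadic annuli in space finite, and any $a>0$ suffices.

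For the convergence, I would estimate $\xi_\epsilon-\xi_{\epsilon'}$ (or $\xi_\epsilon-\xi$) in the same norm and show $\E\|\xi_\epsilon-\xi\|_{\cC^{-1-\kappa}_{\pa}}^2\to0$, which by the same Gaussian hypercontractivity plus Besov-embedding machinery follows from the scale-by-scale second moment bound $\E|\langle\xi_\epsilon-\xi,\psi^n_x\rangle|^2=\|(\rho_\epsilon*\psi^n_x)-\psi^n_x\|_{L^2}^2$. For $2^{-n}\ge\epsilon^{\theta}$ with $\theta<1$ this is $\lesssim(\epsilon\,2^n)^{2}\wedge 1$, giving a gain of a positive power of $\epsilon$ after one sacrifices a little of the regularity exponent (this is exactly why the target exponent is $-1-\kappa$ rather than $-1$, and why $\kappa$ can be taken arbitrarily small); for the remaining high frequencies $2^{-n}<\epsilon^{\theta}$ one uses the uniform-in-$\epsilon$ bound from the first part together with the $\kappa$-room to sum the tail. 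The main obstacle — really the only place requiring care — is bookkeeping the interaction between three small parameters (the mollification scale $\epsilon$, the dyadic scale $2^{-n}$, and the regularity loss $\kappa$) while simultaneously carrying the spatial weight through the supremum over dyadic annuli; everything else is the standard Kolmogorov/Besov argument for white noise, here made easy by the fact that $\xi$ is \emph{exactly} Gaussian so that no chaos expansion beyond the first is needed.
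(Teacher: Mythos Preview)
Your proposal is correct and follows essentially the same approach as the paper's proof: the wavelet characterisation of $\cC^{-1-\kappa}_{\pa}$ (Proposition~\ref{Prop:CharactDistrib}), reduction of $2p$-th moments to second moments via equivalence of Gaussian moments, the key estimate $\bbE\langle\xi-\xi_\epsilon,\psi^n_x\rangle^2 = \|\psi^n_0 - \rho_\epsilon*\psi^n_0\|_{L^2}^2 \lesssim 1\wedge(\epsilon\,2^n)^2$, and summing over scales $n\ge 0$ and lattice points $x\in\Lambda_n$ against $\pa(x)^{-2p}$ with $p$ large enough to ensure convergence in both $n$ and $x$. The only cosmetic difference is that the paper sums the bound $1\wedge(\epsilon\,2^n)^2$ directly over $n$ rather than splitting into low and high frequencies as you suggest.
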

From now on, $a$ is taken arbitrarily small. Since, for any fixed $\eps > 0$,
the mollified noise $\xi_\eps$ is actually a smooth function belonging to $\cC^{\alpha}_{\pa}$ for any $\alpha > 0$, the SPDE
\begin{equ}[e:PAMeps]\tag{PAM$_\varepsilon$}
\partial_t u_\varepsilon = \Delta u_\varepsilon + u_\varepsilon\big(\xi_\varepsilon-C_\varepsilon) \;,\qquad 
u_\varepsilon(0,x) = u_0(x)\;,
\end{equ}
 is well-posed, as can be seen for example by using its Feynman-Kac representation.
The constant $C_\varepsilon$ appearing in this equation is required in order to control the limit
$\varepsilon \to 0$ and will be determined later on.

Second, let $G$ be a compactly supported, even, smooth function on $\R^2\backslash\{0\}$, such that $G(x) = \frac{\log |x|}{2\pi}$ whenever $|x|\leq \frac{1}{2}$. Then, there exists a compactly supported smooth function $F$ on $\R^2$ that vanishes on the ball of radius $\frac{1}{2}$ and such that, in the distributional sense, we have:
\begin{equs}\label{Eq:G}
\Delta G(x) = \delta_0(x) + F(x) \;.
\end{equs}
With these notations at hand, we introduce the process
$Y_\varepsilon(x) := G * \xi_\varepsilon(x)$.
By construction, $Y_\varepsilon$ is a smooth stationary process on $\R^2$ that coincides with the solution of the Poisson equation driven by $\xi_\varepsilon$, up to some smooth term:
\[ \Delta Y_\varepsilon(x) = \xi_\varepsilon(x) + F * \xi_\varepsilon(x) \;.\]
From now on, $D_{x_i}$ denotes the differentiation operator with respect to the variable $x_i$, with $i\in\{1,2\}$. More generally, for every $\ell\in\N^2$, we define $D^\ell_x f$ as the map obtained from $f$ by differentiating $\ell_1$ times in direction $x_1$ and $\ell_2$ times in direction $x_2$. We also use the notation $\nabla f =(D_{x_1} f, D_{x_2} f)$.
The following result is a consequence of Lemma \ref{Lemma:xi} together with the smoothing effect of the convolution with $G$ and $D_{x_i}G$.

\begin{corollary}\label{Cor:Y}
For any given $\kappa\in(0,1/2)$, the sequence of processes $Y_\varepsilon$ (resp. $D_{x_i} Y_\varepsilon$) converges in probability as $\varepsilon\rightarrow 0$ in the space $\cC^{1-\kappa}_{\pa}(\R^2)$ (resp. $\cC^{-\kappa}_{\pa}(\R^2)$) towards the process $Y$ (resp. $D_{x_i} Y$) defined by
\[ Y := G * \xi \;,\qquad D_{x_i} Y := D_{x_i} G * \xi \;. \]
\end{corollary}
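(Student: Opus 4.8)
The plan is to reduce Corollary~\ref{Cor:Y} directly to Lemma~\ref{Lemma:xi} by observing that convolution against $G$ (resp.\ $D_{x_i}G$) maps $\cC^{-1-\kappa}_{\pa}$ boundedly into $\cC^{1-\kappa}_{\pa}$ (resp.\ $\cC^{-\kappa}_{\pa}$), and that this map is linear, so that convergence in probability is preserved. First I would record the relevant convolution estimate in the weighted spaces: if $K$ is a smooth function supported in a fixed ball and $K$ has a singularity at the origin of the type $|x|^{-\beta}$ for its derivatives (here $\beta$ is such that $G$ gains two derivatives, $D_{x_i}G$ gains one), then $f\mapsto K*f$ is bounded from $\cC^{\alpha}_{w}$ to $\cC^{\alpha+\gamma}_{w}$ with $\gamma$ the order of smoothing, provided the weight $w$ is admissible in the sense of Section~\ref{SectionWeights}. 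The polynomial weight $\pa$ is the prototypical admissible weight, and because $K$ has \emph{compact} support the weight is essentially unchanged under convolution (one only pays a harmless multiplicative constant coming from $\sup_{|y|\le R}\pa(x-y)/\pa(x)$, which is bounded uniformly in $x$ since $\pa$ is polynomial). This is precisely the ``smoothing effect of the convolution with $G$ and $D_{x_i}G$'' alluded to just before the statement.

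Concretely, I would proceed as follows. Step 1: decompose $G = G_0 + G_1$ where $G_0(x) = -\tfrac{1}{2\pi}\log|x|\,\chi(x)$ with $\chi$ a smooth cutoff equal to $1$ near the origin, and $G_1 = G - G_0$ is smooth and compactly supported. Convolution with $G_1$ trivially maps any $\cC^{\alpha}_{\pa}$ into $\cC^{\infty}_{\pa}$, so only $G_0$ matters. Step 2: for $G_0$, use the standard fact (e.g.\ via Littlewood--Paley / the Schauder-type estimates underlying the reconstruction theorem, or by a direct kernel estimate) that testing $G_0 * f$ against a rescaled mollifier $\psi^{\lambda}_x$ at scale $\lambda$ reduces, after moving the convolution onto the test function, to testing $f$ against $\langle f, (G_0 * \psi^{\lambda})_x\rangle$, and $G_0 * \psi^{\lambda}$ is (uniformly in $x$, up to the weight $\pa(x)$) a test function at scale $\lambda$ with two extra powers of $\lambda$ gained. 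The weight is handled by the locality of the kernel as explained above. The same argument with $D_{x_i}G_0$ in place of $G_0$ gains only one power of $\lambda$, hence the regularity $-\kappa$ in that case; note $D_{x_i}G = D_{x_i}G_0 + D_{x_i}G_1$ and $D_{x_i}G$ again has compact support. Step 3: conclude. By Lemma~\ref{Lemma:xi}, $\xi_\epsilon \to \xi$ in probability in $\cC^{-1-\kappa}_{\pa}$; applying the bounded linear maps $G*\,\cdot\,$ and $D_{x_i}G*\,\cdot\,$ and using that $\xi_\epsilon$ is smooth so $G*\xi_\epsilon = Y_\epsilon$ and $D_{x_i}G*\xi_\epsilon = D_{x_i}Y_\epsilon$ in the classical sense, we get $Y_\epsilon \to Y := G*\xi$ in $\cC^{1-\kappa}_{\pa}$ and $D_{x_i}Y_\epsilon \to D_{x_i}Y := D_{x_i}G*\xi$ in $\cC^{-\kappa}_{\pa}$, both in probability. (That the distribution $G*\xi$ is indeed differentiable with distributional derivative $D_{x_i}G*\xi$ is immediate from differentiating under the convolution.)

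The only mildly delicate point — and the one I would treat most carefully — is the interaction between the weight and the Besov-type characterisation of $\cC^{\alpha}_{\pa}$: one must check that the constant in $\|G*f\|_{\cC^{1-\kappa}_{\pa}} \lesssim \|f\|_{\cC^{-1-\kappa}_{\pa}}$ is genuinely finite, i.e.\ that the polynomial growth of the weight is not amplified by the convolution. Since $G$ (and $D_{x_i}G$) are supported in a fixed ball $B(0,R)$, for any $x$ one has $\pa(x-y) \le \pa(x)\,\pa(y) \le \pa(x)\,\pa(R)$ for $|y|\le R$, so the weight factors out up to the constant $\pa(R)$; this is the place where compact support of $G$ is essential and where the argument would fail for a fundamental solution without cutoff. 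Everything else is a routine application of the definitions in Section~\ref{SectionWeights} together with classical Schauder estimates, so I expect no further obstacle. It may be cleanest to state the needed convolution bound as a small standalone lemma in Section~\ref{SectionWeights} and then give Corollary~\ref{Cor:Y} a two-line proof.
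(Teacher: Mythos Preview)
Your proposal is correct and follows essentially the same approach as the paper: establish that convolution with the compactly supported kernels $G$ and $D_{x_i}G$ is a bounded linear map between the relevant weighted H\"older spaces (the weight being harmless precisely because of compact support), and then conclude directly from Lemma~\ref{Lemma:xi}. The paper organises the weighted Schauder estimate slightly differently---it cites the classical unweighted Schauder bound and transfers it to $\cC^\alpha_{\pa}$ via the partition of unity $\chi_k$ from the proof of Theorem~\ref{Th:Young}, rather than splitting $G=G_0+G_1$ and testing directly---but this is a cosmetic difference, and your suggestion to package the convolution bound as a standalone lemma in Section~\ref{SectionWeights} is exactly the modularity the paper's proof relies on.
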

We introduce $v_\varepsilon(t,x) := u_\varepsilon(t,x) e^{Y_\varepsilon(x)}$ for all $x\in\R^2$ and $t\geq 0$, and we observe that
\begin{equs}
\partial_t v_\varepsilon = \Delta v_\varepsilon + v_\varepsilon (Z_\varepsilon - F*\xi_\varepsilon) - 2\nabla v_\varepsilon \cdot \nabla Y_\varepsilon  \;,\qquad v_\varepsilon(0,x) = u_0(x) e^{Y_\varepsilon(x)} \;,
\end{equs}
where we have introduced the renormalised process
\begin{equs}
Z_\varepsilon(x) := |\nabla Y_\varepsilon(x)|^2-C_\varepsilon \;.
\end{equs}
At this stage we fix the renormalisation constant $C_\varepsilon$ to be given by
\begin{equs}\label{Eq:Constant}
C_\varepsilon := \bbE\left[|\nabla Y_\varepsilon|^2\right] = -\frac{1}{2\pi}\log \varepsilon + \cO(1) \;,
\end{equs}
where the part denoted by $\cO(1)$ converges to a constant (depending on the choice of $G$ and $\rho$) as $\varepsilon \to 0$, we refer to the end of Section \ref{SectionBounds} for the calculation. The following result, which is proven on Page~\pageref{proof:Z}, shows that this sequence of renormalised processes also converges in an appropriate space. We refer to Nualart~\cite{Nualart2006} for details on Wiener chaoses.
\begin{proposition}\label{Prop:Z}
For any given $\kappa\in(0,1/2)$, the collection of processes $Z_\varepsilon$ converges in probability as $\varepsilon\rightarrow 0$, in the space $\cC^{-\kappa}_{\pa}(\R^2)$, towards the generalised process $Z$ defined as follows: for every test function $\eta$, $\langle Z,\eta \rangle$ is the random variable in the  second homogeneous 
Wiener chaos associated to $\xi$ represented by the $L^2(dz\,d\tilde{z})$ function
\[(z,\tilde{z})\mapsto \int \sum_{i=1,2} D_{x_i} G(z-x) D_{x_i} G(\tilde{z}-x) \eta(x) dx \;.\]
\end{proposition}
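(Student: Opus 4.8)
The plan is to recognise $Z_\epsilon$ as an element of the second homogeneous Wiener chaos generated by $\xi$, to derive moment bounds for its pairing against rescaled bump functions that are uniform in $\epsilon$, and then to invoke a Kolmogorov-type criterion in the weighted H\"older spaces of Section~\ref{SectionWeights}. Write $K_\epsilon^i := (D_{x_i} G)*\rho_\epsilon = D_{x_i}(G*\rho_\epsilon)$, so that $D_{x_i} Y_\epsilon(x) = (K_\epsilon^i * \xi)(x) = \int_{\R^2} K_\epsilon^i(x-z)\,\xi(dz)$ is an element of the first chaos. Expanding $|\nabla Y_\epsilon|^2 = \sum_i (D_{x_i} Y_\epsilon)^2$ and recalling from \eqref{Eq:Constant} that $C_\epsilon = \bbE|\nabla Y_\epsilon|^2$, the zeroth-chaos component of $|\nabla Y_\epsilon|^2$ is exactly $C_\epsilon$; hence $Z_\epsilon(x)$ lies in the second chaos and, by the isometry of multiple Wiener--It\^o integrals (see \cite{Nualart2006}), is represented by the symmetric kernel
\begin{equ}
\hat K_\epsilon(x;z,\tilde z) := \sum_{i=1,2} K_\epsilon^i(x-z)\,K_\epsilon^i(x-\tilde z)\;.
\end{equ}
Consequently $\langle Z_\epsilon,\eta\rangle$ is the second-chaos random variable with kernel $(z,\tilde z)\mapsto \int \hat K_\epsilon(x;z,\tilde z)\,\eta(x)\,dx$, and similarly with $\eta$ replaced by $\psi^\lambda_x(\cdot):=\lambda^{-2}\psi\big(\lambda^{-1}(\cdot-x)\big)$.

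The crucial analytic input is the following: although $\nabla G\notin L^2_{\mathrm{loc}}$, the functions $R^{ij}_\epsilon(w):=\int K_\epsilon^i(u)\,K_\epsilon^j(u+w)\,du$ --- which, up to a sign, are the gradient covariances $\bbE[D_{x_i} Y_\epsilon(0)\,D_{x_j} Y_\epsilon(w)]$ --- are bounded, \emph{uniformly} in $\epsilon\in(0,1]$, by $C\,(1+|\log|w||)$ on a fixed compact set and vanish off it. This follows from the uniform pointwise bound $|K_\epsilon^i(u)|\leq C(|u|^{-1}\wedge\epsilon^{-1})$ (a consequence of the behaviour $\nabla G(z)\sim-z/(2\pi|z|^2)$ near the origin) together with the classical two-dimensional estimate $\int |u|^{-1}|u+w|^{-1}\,du\lesssim 1+|\log|w||$ over a bounded region. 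Applying the It\^o isometry and integrating out the noise variables one obtains
\begin{equ}
\bbE\big|\langle Z_\epsilon,\psi^\lambda_x\rangle\big|^2 \;\asymp\; \int\!\!\int \Big(\sum_{i,j} R^{ij}_\epsilon(y-y')^2\Big)\,\psi^\lambda_x(y)\,\psi^\lambda_x(y')\,dy\,dy' \;\leq\; C\,(1+|\log\lambda|)^2\;,
\end{equ}
uniformly in $\epsilon$ and $x$, since $\psi^\lambda_x$ is supported in a ball of radius of order $\lambda$ around $x$ with $\int|\psi^\lambda_x|$ bounded. (By stationarity of $\nabla Y_\epsilon$ this second moment in fact does not depend on $x$; the weight $\pa$ will only be needed for the lattice summation below.) Since $Z_\epsilon$ lives in a fixed chaos, the equivalence of all $L^p(\Omega)$-norms there upgrades this to
\begin{equ}
\bbE\big|\langle Z_\epsilon,\psi^\lambda_x\rangle\big|^p \;\leq\; C_p\,(1+|\log\lambda|)^p \;\leq\; C_{p,\kappa}\,\lambda^{-\kappa p}\,\pa(x)^p
\end{equ}
for every $p\geq 1$ and $\kappa>0$ (a wasteful bound, but more than enough), uniformly over $\epsilon,\lambda\in(0,1]$, $x\in\R^2$, and $\psi$ in a bounded subset of $\cC^2$.

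Next I would run the same computation for the difference $Z_\epsilon-Z_{\epsilon'}$, whose kernel $\hat K_\epsilon-\hat K_{\epsilon'}$ splits into two terms, each carrying one factor of the difference kernel $K_\epsilon^i-K_{\epsilon'}^i=(D_{x_i} G)*(\rho_\epsilon-\rho_{\epsilon'})$. At the modest cost of replacing the logarithm above by a small negative power of $|w|$, this yields
\begin{equ}
\bbE\big|\langle Z_\epsilon-Z_{\epsilon'},\psi^\lambda_x\rangle\big|^p \;\leq\; C_{p,\kappa}\,(\epsilon\vee\epsilon')^{\theta p}\,\lambda^{-\kappa p}\,\pa(x)^p
\end{equ}
for some $\theta>0$, uniformly as above. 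Feeding these two estimates into the Kolmogorov-type criterion in the weighted spaces $\cC^{-\kappa}_{\pa}$ of Section~\ref{SectionWeights} --- where the weight $\pa$ is exactly what makes the relevant sums over $\bbZ^2$ converge, once $p$ is chosen large enough that $ap>2$ --- shows that $(Z_\epsilon)_\epsilon$ is Cauchy in probability in $\cC^{-\kappa}_{\pa}(\R^2)$, hence converges in probability to some limit $Z$.

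It remains to identify $Z$. Fix a smooth, compactly supported test function $\eta$. On the one hand, $\langle Z_\epsilon,\eta\rangle\to\langle Z,\eta\rangle$ in probability; on the other hand, $\langle Z_\epsilon,\eta\rangle$ is the double Wiener--It\^o integral $I_2(f_\epsilon)$ of the kernel $f_\epsilon(z,\tilde z)=\int\sum_i K_\epsilon^i(x-z)\,K_\epsilon^i(x-\tilde z)\,\eta(x)\,dx$, and one checks directly that $f_\epsilon\to f$ in $L^2(dz\,d\tilde z)$, where $f(z,\tilde z)=\int\sum_i D_{x_i} G(x-z)\,D_{x_i} G(x-\tilde z)\,\eta(x)\,dx$; the function $f$ is indeed in $L^2$ since its only singularity, located on the diagonal $z=\tilde z$, is logarithmic and therefore square-integrable in dimension~$2$. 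By the It\^o isometry $I_2(f_\epsilon)\to I_2(f)$ in $L^2(\Omega)$, so $\langle Z,\eta\rangle=I_2(f)$, and since $D_{x_i} G$ is odd we have $D_{x_i} G(x-z)\,D_{x_i} G(x-\tilde z)=D_{x_i} G(z-x)\,D_{x_i} G(\tilde z-x)$, so $f$ coincides with the kernel in the statement. I expect the main obstacle to be the uniform-in-$\epsilon$ control of the kernels $R^{ij}_\epsilon$ and of their $\epsilon$-differences \emph{all the way down to the diagonal}: one has to verify precisely that the logarithmic divergence of the unrenormalised gradient covariance is absorbed by the choice of $C_\epsilon$ in \eqref{Eq:Constant}, so that it leaves no trace in $Z_\epsilon$. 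The remaining ingredients --- hypercontractivity in a fixed chaos and the weighted Kolmogorov criterion --- are routine.
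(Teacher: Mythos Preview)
Your proof is correct and follows the same overall architecture as the paper's: represent $Z_\epsilon$ in the second homogeneous chaos, obtain second-moment bounds on $\langle Z_\epsilon,\psi^\lambda_x\rangle$ uniform in $\epsilon$, upgrade to all moments by hypercontractivity, and conclude via the wavelet characterisation of $\cC^{-\kappa}_{\pa}$ (Proposition~\ref{Prop:CharactDistrib}), the weight $\pa$ being exactly what makes the lattice sum over $\Z^2$ converge for $p$ large.

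Two tactical differences are worth noting. First, the paper isolates the moment bounds as a separate Lemma~\ref{Lemma:Z} and proves them through the abstract calculus of compactly supported singular kernels (the $\$\cdot\$_{\zeta;m}$ norms from \cite[Sec.~10.3]{Hairer2014} recalled at the start of Section~3): $D_{x_i}G_\epsilon$ is of order $-1-\kappa'$, the convolution $(D_{x_i}G_\epsilon)*(D_{x_i}G_\epsilon)$ is then of order $-2\kappa'$, and squaring and testing against $\eta^\lambda$ gives $\lambda^{-2\kappa}$ directly. Your hands-on route via the covariance $R^{ij}_\epsilon$ and the explicit two-dimensional estimate $\int_B|u|^{-1}|u+w|^{-1}\,du\lesssim 1+|\log|w||$ is the same computation in different clothing, and is in fact marginally sharper (a logarithm rather than an arbitrarily small negative power). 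Second, the paper compares $Z_\epsilon$ directly to the limit $Z$ --- the third bound in Lemma~\ref{Lemma:Z} is for $Z_\epsilon-Z$, not $Z_\epsilon-Z_{\epsilon'}$ --- which spares the Cauchy-plus-identification detour you sketch at the end; your route is perfectly legitimate, just slightly longer.
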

We are now able to set up a fixed point argument for the process $v_\varepsilon$ with controls that are uniform 
in $\varepsilon$. The precise statement of the main result of this article requires some notation: in this introduction, we provide a weaker but more readable version of the statement and we refer to Section \ref{sectionPicard} for the details.
\begin{theorem}\label{ThMain}
Let $u_0$ be a H\"older distribution with regularity better than $-1$, and that grows at most exponentially fast at infinity. The sequence of processes $v_\varepsilon$ converges uniformly on all compact sets of $(0,\infty)\times\R^2$, in probability as $\varepsilon\rightarrow 0$, to a limit $v$ which is the unique solution of
\begin{equs}
\partial_t v = \Delta v + v(Z- F*\xi) - 2 \nabla v \cdot \nabla Y \;,\qquad v(0,x) = u_0(x) e^{Y(x)}\;.
\end{equs}
As a consequence, $u_\varepsilon$ converges in probability towards the process $u=ve^{-Y}$.
\end{theorem}

\section{Weighted H\"older spaces}\label{SectionWeights}
In this section, we introduce the appropriate weighted spaces that will allow us to set up a fixed point argument associated to (\ref{e:PAM}). We work in $\R^d$ for a general dimension $d\in\N$, even though we will apply these results to $d=2$ in the next sections.
\begin{definition}
A function $w:\R^d\rightarrow(0,\infty)$ is a weight if there exists a positive constant $C>0$ such that
\[ C^{-1} \leq \sup_{|x-y| \leq 1} \frac{w(x)}{w(y)} \leq C \;.\]
\end{definition}
In this article, we will consider two families of weights indexed by $a,\ell\in\R$:
\[ \pa(x) := (1+|x|)^a \;,\qquad \e_\ell(x) := \exp\big(\ell (1+|x|)\big) \;. \]
Observe that the constant $C$ can be taken uniformly for all $\pa$ and $\e_\ell$, as long as $a$ and $\ell$ lie in a compact domain of $\R^2$.
We can now consider weighted versions of the usual spaces of H\"older functions $\cC^\alpha(\R^d)$.
\begin{definition}
For $\alpha\in(0,1)$, $\cC^\alpha_w(\R^d)$ is the space of functions $f:\R^d\rightarrow\R$ such that
\[ \| f \|_{\alpha,w} := \sup_{x\in\R^d} \frac{|f(x)|}{w(x)} + \sup_{|x-y|\leq 1}\frac{|f(x)-f(y)|}{w(x) |x-y|^\alpha} < \infty \;.\]
More generally, for every $\alpha > 1$, we define $\cC^{\alpha}_w(\R^d)$ recursively
as the space of functions $f$ which admit first order derivatives and such that
\[ \| f \|_{\alpha,w} := \sup_{x\in\R^d} \frac{|f(x)|}{w(x)} + \sum_{i=1}^d\| D_{x_i} f \|_{\alpha-1,w} < \infty \;.\]
\end{definition}
We then extend this definition to negative $\alpha$. To this end, we define for every $r\in\N$, the space $\cB^r_1$ of all smooth functions $\eta$ on $\R^d$, which are compactly supported in the unit ball of $\R^d$ and whose $\cC^r$ norm is smaller than $1$. We will use the notation $\eta^\lambda_x$ to denote the function $y\mapsto \lambda^{-d} \eta\big(\frac{y-x}{\lambda}\big)$.
\begin{definition}\label{Def:HolderWeight}
For every $\alpha < 0$, we set $r:=-\lfloor \alpha \rfloor$ and we define $\cC^\alpha_w(\R^d)$ as the space of distributions $f$ on $\R^d$ such that
\[ \| f \|_{\alpha,w} := \sup_{x\in\R^d} \sup_{\eta\in\cB^r_1} \sup_{\lambda\in (0,1]} \frac{|f(\eta_x^\lambda)|}{w(x)\lambda^\alpha} < \infty \;.\]
\end{definition}
In order to deal with the regularity of random processes, it is convenient to have a characterisation of $\cC^{\alpha}_w$ that only relies on a countable number of test functions. To state such a characterisation, we need some notation. For any $\psi\in\cC^r$, we set
\begin{equs}
\psi^n_x(y):= 2^{\frac{nd}{2}}\psi((y_1-x_1)2^n,\ldots,(y_d-x_d)2^n)\;,\quad x,y\in\R^d\;,\quad n\geq 0\;.
\end{equs}
We also define $\Lambda_n:=\{(2^{-n}k_i)_{i=1\ldots d}:k_i\in\Z\}$.
\begin{proposition}\label{Prop:CharactDistrib}
Let $\alpha<0$ and $r > |\alpha|$. There exists a finite set $\Psi$ of compactly supported functions in $\cC^r$, as well as a compactly supported function $\varphi\in\cC^r$ such that $\{\varphi^0_x, x\in\Lambda_0\} \cup\{ \psi^n_{x}, n\geq 0, x\in \Lambda_n,\psi\in\Psi\}$ forms an orthonormal basis of $\R^d$, and such that for any distribution $\xi$ on $\R^d$, the following equivalence holds: $\xi \in \cC^{\alpha}_{w}$ if and only if $\xi$ belongs to the dual of $\cC^{r}$ and
\begin{equs}\label{Eq:NormCaw}
\sup_{n\geq 0} \sup_{\psi\in\Psi} \sup_{x\in\Lambda_n}\frac{|\left\langle\xi,\psi_x^{n}\right\rangle|}{w(x)2^{-{nd\over 2}-n\alpha}} + \sup_{x\in\Lambda_0}\frac{|\left\langle\xi, \varphi_x^{0}\right\rangle|}{w(x)}< \infty \;. 
\end{equs}
\end{proposition}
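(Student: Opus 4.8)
The plan is to invoke a standard Littlewood--Paley / wavelet construction and then adapt the localisation to the weight $w$. First I would fix a compactly supported multiresolution analysis: take $\varphi\in\cC^r$ to be the scaling function of a Daubechies-type wavelet with enough vanishing moments and regularity (regularity $> r$, which forces the support of $\varphi$ to be large but still compact), and let $\Psi$ be the associated finite family of mother wavelets, so that $\{\varphi^0_x : x\in\Lambda_0\}\cup\{\psi^n_x : n\ge 0,\ x\in\Lambda_n,\ \psi\in\Psi\}$ is an orthonormal basis of $L^2(\R^d)$ consisting of $\cC^r$ functions. (The normalisation $\psi^n_x(y)=2^{nd/2}\psi((y-x)2^n)$ is the $L^2$-normalisation, hence the factor $2^{-nd/2-n\alpha}$ below, which is just $2^{-nd/2}\cdot 2^{-n\alpha}$: the $L^2$ weight times the usual parabolic/Hölder scaling factor $\lambda^\alpha$ with $\lambda=2^{-n}$.)

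Next I would prove the two implications separately. For the easy direction, suppose $\xi\in\cC^\alpha_w$. Since each $\psi^n_x$ is, up to the normalisation constant $2^{-nd/2}$, of the form $(\eta^\lambda_x)$ with $\lambda=2^{-n}$ and $\eta$ a fixed $\cC^r$ function supported in a ball (rescale $\eta$ by a fixed constant so it lands in $\cB^r_1$), Definition \ref{Def:HolderWeight} gives $|\langle\xi,\psi^n_x\rangle|\lesssim w(x)\,2^{-nd/2}\,2^{-n\alpha}$, and similarly $|\langle\xi,\varphi^0_x\rangle|\lesssim w(x)$; taking suprema yields finiteness of (\ref{Eq:NormCaw}). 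One also checks that $\xi$ lies in the dual of $\cC^r$, e.g. by pairing with the wavelet expansion of a test function and using the rapid decay of wavelet coefficients of $\cC^r$ functions together with the sub-exponential growth of $w$.

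For the converse, assume $\xi$ is in the dual of $\cC^r$ with (\ref{Eq:NormCaw}) finite, and let $\eta\in\cB^r_1$, $x_0\in\R^d$, $\lambda\in(0,1]$ be given. Expand $\eta^\lambda_{x_0}$ in the orthonormal basis, $\eta^\lambda_{x_0}=\sum_{x\in\Lambda_0}\langle\eta^\lambda_{x_0},\varphi^0_x\rangle\varphi^0_x+\sum_{n\ge0}\sum_{\psi\in\Psi}\sum_{x\in\Lambda_n}\langle\eta^\lambda_{x_0},\psi^n_x\rangle\psi^n_x$, and pair with $\xi$. The crucial estimates are the classical bounds on the coefficients $\langle\eta^\lambda_{x_0},\psi^n_x\rangle$: using the $r$ vanishing moments of $\psi$ when $2^{-n}\le\lambda$ and the $\cC^r$ regularity of $\eta^\lambda_{x_0}$ (which costs $\lambda^{-r}$ times $\cC^0$-control) when $2^{-n}>\lambda$, together with the compact supports, one gets $|\langle\eta^\lambda_{x_0},\psi^n_x\rangle|\lesssim 2^{-nd/2}(2^n\lambda)^{-r}\wedge(2^n\lambda)^{r}$ and spatial localisation $|x-x_0|\lesssim\lambda+2^{-n}$. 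Plugging in (\ref{Eq:NormCaw}) and the comparability $w(x)\lesssim w(x_0)$ for $|x-x_0|\lesssim 1$ (using the very definition of a weight, iterated a bounded number of times since $\lambda\le1$), the sum over $n$ and over the $O((2^n\lambda\vee1)^d)$ relevant lattice points $x$ converges — this is where $r>|\alpha|$ is used, to beat the factor $2^{-n\alpha}$ in the regime $2^{-n}\ll\lambda$ — and one obtains $|\langle\xi,\eta^\lambda_{x_0}\rangle|\lesssim w(x_0)\lambda^\alpha$, i.e. $\xi\in\cC^\alpha_w$ with the expected norm bound.

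The main obstacle is not conceptual but bookkeeping: one must track the weight carefully through the wavelet sum, making sure that the local doubling property $C^{-1}\le w(x)/w(y)\le C$ for $|x-y|\le1$ is enough — it is, because every wavelet $\psi^n_x$ entering the expansion of $\eta^\lambda_{x_0}$ with a non-negligible coefficient is centred within $O(1)$ of $x_0$ (as $\lambda\le1$ and the supports are uniformly bounded), so only a bounded number of applications of the doubling bound is needed and no growth assumption on $w$ beyond being a weight enters. A secondary technical point is to verify that the candidate $\xi$ in the converse direction genuinely extends to a bounded functional on $\cC^\alpha_w$-test objects and that the pairing $\langle\xi,\eta^\lambda_{x_0}\rangle$ is well-defined via the (absolutely convergent) wavelet series; this follows once the coefficient bounds above are in hand.
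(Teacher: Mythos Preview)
Your proposal is correct and follows exactly the route the paper takes: the paper's own proof is just a two-sentence pointer to the standard wavelet characterisation (Meyer, Daubechies, or \cite[Prop.~3.20]{Hairer2014}) together with the remark that, since all estimates are local, the doubling property of $w$ suffices to carry the weight through --- precisely the argument you spell out in detail. One bookkeeping slip: in your coefficient bound the prefactor should be $2^{+nd/2}$ rather than $2^{-nd/2}$ (since $\eta^\lambda_{x_0}$ is $L^1$-normalised while $\psi^n_x$ is $L^2$-normalised); with that correction the sums over $n$ and lattice points close exactly as you describe.
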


\begin{proof}
This result is rather standard and is obtained by a wavelet analysis, see~\cite{Meyer,Ingrid} or~\cite[Prop.~3.20]{Hairer2014}. In these references, the spaces are not weighted, but since all the arguments needed for the proof are local, it suffices to use the fact that $\frac{w(y)}{w(x)}$ is bounded from above and below uniformly over all $x,y$ such that $|x-y| \leq 1$ to obtain our statement.
\end{proof}

\begin{remark}
If $\xi$ is a linear transformation acting on the linear span of the functions $\phi^0_x$, $\psi_x^n$ such that (\ref{Eq:NormCaw}) is finite, then $\xi$ can be extended uniquely to an element of $\cC^{\alpha}_w$.
\end{remark}

We are now in position to characterise the regularity of the noise.
\begin{proof}[of Lemma~\ref{Lemma:xi}]\label{proof:xi} We work in dimension $d=2$. Set $\alpha = -1-\kappa$ with $\kappa > 0$. By Proposition~\ref{Prop:CharactDistrib}, it suffices to show that almost surely
\begin{equation*}
\sup_{n\geq 0}\sup_{\psi\in\Psi}\sup_{x\in\Lambda_n}\frac{|\left\langle\xi,\psi_x^{n}\right\rangle|}{2^{-n(1+\alpha)}\pa(x)} \lesssim 1  \;,\qquad \sup_{x\in\Lambda_0}\frac{|\left\langle\xi, \varphi_x^{0}\right\rangle|}{\pa(x)} \lesssim 1\;.
\end{equation*}
We restrict to the first bound, since the second is simpler. For any integer $p\geq 1$, we write
\begin{equs}
\bbE\left[ \sup_{n\geq 0}\sup_{\psi \in \Psi}\sup_{x\in\Lambda_n}\Big( \frac{|\left\langle\xi,\psi_x^{n}\right\rangle|}{2^{-n(\alpha+1)}\pa(x)} \Big)^{2p}\right] &\lesssim \sum_{n\geq 0}\sum_{\psi \in \Psi}\sum_{x\in\Lambda_n}\frac{2^{2np(\alpha+1)}}{\pa(x)^{2p}}\big(\bbE \left\langle\xi,\psi_x^n\right\rangle^2\big)^p\\
&\lesssim \sum_{n\geq 0}\sum_{\psi \in \Psi}\sum_{x\in\Z^2}\frac{2^{2np(\alpha+1)}}{\pa(x)^{2p}}2^{2n}\;.
\end{equs}
At the first line, we used the equivalence of moments of Gaussian random variables. At the second line, we used the following facts: the restriction of $\Lambda_n$ to the unit ball of $\R^2$ has at most of the order of $2^{2n}$ elements, the $L^2$ norm of $\psi_x^{n}$ is $1$ and $\pa$ is a weight. Recall that $\alpha < -1$, $\Psi$ is a finite set and $\pa(x)=(1+|x|)^a$. Taking $p$ large enough, we deduce that the triple sum converges, so that $\xi$ admits a modification that almost surely belongs to $\cC^{\alpha}_{\pa}$. We now turn to $\|\xi_\varepsilon - \xi\|_{\alpha,\pa}$: the computation is very similar, the only difference rests on the term
\begin{equs}\label{Eq:RegulXi}
\bbE \left\langle\xi-\xi_\varepsilon,\psi_x^{n}\right\rangle^2 &= \|\psi_0^n-\rho_\varepsilon*\psi_0^n\|^2_{L^2} \lesssim 1\wedge (\varepsilon^2 2^{2n})\;.
\end{equs}
Let $n_0$ be the smallest integer such that $2^{-n_0} \leq \varepsilon$. For $p$ large enough, we obtain
\begin{equs}
\bbE \left[\sup_{n\geq 0}\sup_{\psi \in \Psi}\sup_{x\in\Lambda_n}\Big( \frac{|\left\langle\xi-\xi_\varepsilon,\psi_x^{n}\right\rangle|}{2^{-n(\alpha+1)}\pa(x)} \Big)^{2p}\right] &\lesssim \sum_{x\in\Z^2}\sum_{n\geq 0}\frac{2^{2n+2np(\alpha+1)}}{\pa(x)^{2p}} (1 \wedge \varepsilon^{2p} 2^{2np}) \\
&\lesssim \sum_{n<n_0} \varepsilon^{2p} 2^{2n \big( p(\alpha+2) + 1\big)} + \sum_{n\geq n_0} 2^{2n \big( p(\alpha+1) + 1\big)}\;.
\end{equs}
Since $\alpha=-1-\kappa < -1$, taking $p$ large enough, we get that the second sum on the r.h.s.~is bounded by a term of order $\varepsilon^{-2\big(1+p(\alpha+1)\big)}$. Then, according as $p(\alpha+2)+1$ is negative, null or positive, the first sum on the r.h.s.~is bounded by a term of order $\varepsilon^{2p}$, $\varepsilon^{2p}|\log_2\varepsilon|$ or $\varepsilon^{-2\big(1+p(\alpha+1)\big)}$. Consequently, for $p$ large enough $\bbE \|\xi_\varepsilon - \xi\|_{\alpha,\pa}^{2p} \rightarrow 0$ as $\varepsilon\downarrow 0$.
\end{proof}

Let $w_f$ and $w_g$ be two weights on $\R^d$. We have the following elementary extension of the classical 
theorem \cite[Thm~2.52]{BookChemin}.
\begin{theorem}\label{Th:Young}
Let $f\in\cC^\alpha_{w_f}$ and $g\in\cC^\beta_{w_g}$ where $\alpha < 0$ and $\beta > 0$ with $\alpha+\beta > 0$. Then there exists a continuous bilinear map $(f,g)\mapsto f\cdot g$ from $\cC^\alpha_{w_f}\times\cC^\beta_{w_g}$ into $\cC^\alpha_{w_f w_g}$ that extends the classical multiplication of smooth functions.
\end{theorem}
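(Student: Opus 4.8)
The plan is to deduce the statement from its unweighted counterpart \cite[Thm~2.52]{BookChemin} by a localisation argument, in exactly the same spirit as the proof of Proposition~\ref{Prop:CharactDistrib}. The point is that the weights $w_f,w_g$ vary slowly, so that cutting $f$ and $g$ off near a point $x$ converts the weighted bounds into \emph{unweighted} bounds on the cut-off pieces, up to factors comparable to $w_f(x)$ and $w_g(x)$ respectively; the classical theorem is then applied to the cut-off pieces.

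\emph{A priori bound.} I first treat the case where the product $fg$ is already classically defined (for instance when one of the two factors is smooth) and prove $\|fg\|_{\alpha,w_fw_g}\lesssim\|f\|_{\alpha,w_f}\|g\|_{\beta,w_g}$ with a constant independent of $f,g$. Fix a smooth $\chi$ that equals $1$ on the unit ball and is supported in the ball of radius $2$, and set $\chi_x:=\chi(\cdot-x)$; these translates have uniformly bounded $\cC^r$ norms. For $x\in\R^d$, $\lambda\in(0,1]$ and $\eta\in\cB^r_1$, the function $\eta^\lambda_x$ is supported in the unit ball around $x$, so $\langle fg,\eta^\lambda_x\rangle=\langle(\chi_x f)(\chi_x g),\eta^\lambda_x\rangle$. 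A direct computation with the test-function definition (using only that $\chi_x$ is a fixed-shape bump and that $\eta^\lambda_x$ has support of radius $\le 1$) gives, uniformly in $x$, the unweighted bounds $\|\chi_x f\|_{\cC^\alpha}\lesssim w_f(x)\|f\|_{\alpha,w_f}$ and $\|\chi_x g\|_{\cC^\beta}\lesssim w_g(x)\|g\|_{\beta,w_g}$, where one uses that $\sup_{y\in B(x,3)}w(y)\lesssim w(x)$. Applying the unweighted theorem to the cut-off pieces yields $(\chi_x f)(\chi_x g)\in\cC^\alpha(\R^d)$ with $\|(\chi_x f)(\chi_x g)\|_{\cC^\alpha}\lesssim w_f(x)w_g(x)\|f\|_{\alpha,w_f}\|g\|_{\beta,w_g}$; testing this against $\eta^\lambda_x$ and taking the supremum over $x,\lambda,\eta$ gives the claimed estimate. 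The same argument applies verbatim to any pair of exponents $(\alpha',\beta')$ with $\alpha'<0<\beta'$ and $\alpha'+\beta'>0$.

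\emph{Construction and passage to the limit.} For a general $g\in\cC^\beta_{w_g}$ I mollify $f$: setting $f_\epsilon:=\rho_\epsilon*f$, the product $f_\epsilon g$ is classical. As in the proof of Lemma~\ref{Lemma:xi} one has $f_\epsilon\to f$ in $\cC^{\alpha'}_{w_f}$ for every $\alpha'<\alpha$, with $\|f_\epsilon\|_{\alpha,w_f}\lesssim\|f\|_{\alpha,w_f}$ uniformly in $\epsilon$; I choose $\alpha'$ close enough to $\alpha$ that $\alpha'+\beta>0$ still holds. Applying the a priori bound for $(\alpha',\beta)$ to $(f_\epsilon-f_{\epsilon'})g$ shows that $(f_\epsilon g)_\epsilon$ is Cauchy in $\cC^{\alpha'}_{w_fw_g}$, and I define $f\cdot g$ to be its limit. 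The a priori bound for $(\alpha,\beta)$ gives $\|f_\epsilon g\|_{\alpha,w_fw_g}\lesssim\|f\|_{\alpha,w_f}\|g\|_{\beta,w_g}$ uniformly in $\epsilon$, and since the pairing against each fixed $\eta^\lambda_x$ is continuous on $\cC^{\alpha'}_{w_fw_g}$ this bound is inherited by the limit; hence $f\cdot g\in\cC^\alpha_{w_fw_g}$ with the stated bound. Running the same construction by mollifying $g$ instead and comparing (again through the a priori bound applied to the relevant differences) shows the limit is independent of the mollifier and coincides with the classical product whenever the latter exists, so $(f,g)\mapsto f\cdot g$ extends ordinary multiplication; being bilinear and bounded, it is continuous.

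The only step that is not entirely routine is the bookkeeping in the localisation, namely checking that the constants relating $\|\chi_x f\|_{\cC^\alpha}$ to $w_f(x)\|f\|_{\alpha,w_f}$ and $\|\chi_x g\|_{\cC^\beta}$ to $w_g(x)\|g\|_{\beta,w_g}$ are genuinely uniform in $x$; this is precisely where the slow variation of the weights enters. Everything else is a transcription of the unweighted theorem together with a standard mollification argument.
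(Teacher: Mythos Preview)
Your argument is correct and rests on the same localisation idea as the paper: cut off $f$ and $g$ near each point to reduce to the unweighted product of \cite[Thm~2.52]{BookChemin}, using the slow variation of the weights to control the local unweighted norms by $w_f(x)$ and $w_g(x)$. The construction step, however, is organised differently. The paper takes a \emph{partition of unity} $\sum_{k\in\Z^d}\chi_k=1$, defines each local product $f\chi_k\cdot g\chi_\ell$ directly via the unweighted theorem, and sets $f\cdot g:=\sum_{k,\ell}f\chi_k\cdot g\chi_\ell$; since the sum is locally finite against any test function $\eta^\lambda_x$, no limiting procedure is needed, and the extension of classical multiplication is inherited term by term from the unweighted result. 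Your route instead proves an a priori bound for classically defined products using a single cutoff $\chi_x$ at each point, and then constructs the general product as a limit of mollifications $f_\epsilon g$, which requires the additional (standard but slightly fiddly) check that the limit is independent of the mollifier and recovers the classical product. The paper's construction is a bit more direct; yours has the small advantage that stability under approximation is built in from the start.
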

\begin{remark}
The space $\cC^\alpha$ defined in Section 2 coincides with the usual Besov space $\cB^{\alpha}_{\infty,\infty}$. Indeed, they enjoy the same characterisation in a wavelet analysis, see \cite[Prop 3.20]{Hairer2014} and \cite[Section 6.10]{Meyer}.
\end{remark}
\begin{proof}
Let $\chi$ be a compactly supported, smooth function on $\R^d$ such that $\sum_{k\in\Z^d} \chi(x-k) = 1$ for all $x\in\R^d$. For simplicity, we set $\chi_k(\cdot):=\chi(\cdot-k)$. Writing $\|\cdot\|_\alpha$ for the 
$\alpha$-H\"older norm without weight (i.e.\ with weight $1$), observe that $h \in \CC^\alpha_w$ if and only
if $\|h\chi_k\|_\alpha \lesssim w(k)$
hold uniformly over all $k\in\Z^d$, and $\|h\|_{\alpha,w}$ is equivalent to the smallest possible bound. From \cite[Thm~2.52]{BookChemin}, we know that $f\chi_k \cdot g\chi_\ell$ is well-defined for all $k,\ell\in\Z^d$, and that the bound $\|f\chi_k\cdot g\chi_\ell \|_{\alpha} \lesssim \| f\chi_k \|_{\alpha}\| g\chi_\ell \|_{\beta}$ holds. Consequently, we get
\begin{equs}
\| f\chi_k\cdot g \chi_\ell \|_{\alpha} \lesssim w_f(k) w_g(\ell)\| f \|_{\alpha,w_f}\| g\|_{\beta,w_g}\;,
\end{equs}
uniformly over all $k,\ell\in\Z^d$. Since the number of non-zero terms among $\{\langle f\chi_k\cdot g \chi_\ell, \eta_x \rangle, k,\ell\in\Z^d\}$ is uniformly bounded over all $\eta\in\cB^r_1$, all $x\in\R^d$ and all $f,g$ as in the statement, we deduce that $f\cdot g := \sum_{k,\ell\in\Z^d} f\chi_k\cdot g\chi_\ell$ is well-defined and that $\| f\cdot g \|_{\alpha,w_f w_g} \lesssim \|f\|_{\alpha,w_f} \|g\|_{\beta,w_g}$ holds. Finally, the multiplication of \cite[Thm~2.52]{BookChemin} extends the classical multiplication of smooth functions, therefore, from our construction, it is plain that this property still holds in our case.
\end{proof}

Let now $P_t(x) := (4\pi t)^{-\frac{d}{2}}e^{-|x|^2 / 4t}$ be the heat kernel in dimension $d$. We write $P_t * f$ for the spatial convolution of $P_t$ with a function/distribution $f$ on $\R^d$. We have the following regularisation property which is a slight variant of well-known facts. 
\begin{lemma}\label{Lemma:HeatKernel}
For every $\beta \geq \alpha$ and every $f\in\cC^\alpha_{\e_\ell}$, we have
\[ \| P_t f \|_{\beta, \e_\ell} \lesssim t^{-\frac{\beta-\alpha}{2}} \|f\|_{\alpha,\e_\ell}  \;,\]
uniformly over all $\ell$ in a compact set of $\R$ and all $t$ in a compact set of $[0,\infty)$.
\end{lemma}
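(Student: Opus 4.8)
The plan is to reduce the weighted estimate to the known unweighted heat-kernel regularisation estimate by localising with a partition of unity, exactly as in the proof of Theorem~\ref{Th:Young}, and then to control the error coming from the fact that the heat semigroup is not local. First I would fix a smooth compactly supported $\chi$ with $\sum_{k\in\Z^d}\chi(\cdot-k)=1$, write $\chi_k=\chi(\cdot-k)$, and recall (as in the proof of Theorem~\ref{Th:Young}) that $\|f\|_{\alpha,\e_\ell}$ is comparable to the smallest constant $M$ with $\|f\chi_k\|_\alpha\le M\,\e_\ell(k)$ uniformly in $k$, and similarly for the target norm $\|P_tf\|_{\beta,\e_\ell}$ tested against $\chi_j$. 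So it suffices to bound $\|(P_tf)\chi_j\|_\beta$ by $t^{-(\beta-\alpha)/2}\e_\ell(j)\|f\|_{\alpha,\e_\ell}$ uniformly over $j$, $t$ in a compact subset of $[0,\infty)$, and $\ell$ in a compact set.

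The key step is the decomposition $P_tf=\sum_{k}P_t(f\chi_k)$ and the split of this sum according to whether $|j-k|$ is small or large. For the near-diagonal terms $|j-k|\le R$ (with $R$ a fixed constant depending only on the support of $\chi$ and the heat kernel cutoff we introduce) I would use the classical unweighted bound $\|P_t g\|_\beta\lesssim t^{-(\beta-\alpha)/2}\|g\|_\alpha$ — this is the standard Schauder/heat-kernel estimate, which for $\beta>1$ follows recursively by differentiating $P_t$ and for $\beta\le 0$ or $\beta\in(0,1)$ is classical — applied to $g=f\chi_k$, giving a contribution $\lesssim t^{-(\beta-\alpha)/2}\sum_{|j-k|\le R}\e_\ell(k)\|f\|_{\alpha,\e_\ell}\lesssim t^{-(\beta-\alpha)/2}\e_\ell(j)\|f\|_{\alpha,\e_\ell}$, where in the last step we used that $\e_\ell(k)\lesssim\e_\ell(j)$ uniformly for $|j-k|\le R$ and $\ell$ bounded (this is the weight property, iterated a bounded number of times). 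For the far-diagonal terms $|j-k|>R$ one uses the Gaussian decay of $P_t$: testing $\chi_j P_t(f\chi_k)$ against a rescaled bump $\eta_x^\lambda$ localised near $j$, the kernel $P_t(y-z)$ with $z$ in the support of $\chi_k$ and $y$ near $j$ is evaluated at points at distance $\gtrsim|j-k|$, so each such term is bounded by $C_N t^{-N}|j-k|^{-N}\|f\chi_k\|_\alpha$ for any $N$ (after paying the usual negative powers of $t$ needed to absorb the derivatives in the definition of $\|\cdot\|_\beta$ when $\beta$ is large, which is where the restriction of $t$ to a compact set of $[0,\infty)$ matters — we only need this for $t\le 1$, say, and for $t\ge 1$ we may even have lost a harmless constant). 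Summing over $k$ with $|j-k|>R$ against the at-most-exponential growth $\e_\ell(k)$ converges because the Gaussian/polynomial decay in $|j-k|$ beats $\e_\ell(k)/\e_\ell(j)\lesssim e^{c|j-k|}$, and the whole far-diagonal piece is $\lesssim\e_\ell(j)\|f\|_{\alpha,\e_\ell}$, which is even better than what is claimed.

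The main obstacle — really the only subtle point — is the interplay between the exponential weight and the off-diagonal decay of $P_t$ when $t$ is not small: a priori $\e_\ell(k)/\e_\ell(j)$ can be as large as $e^{\ell|j-k|}$, and if one only had Gaussian decay $e^{-|j-k|^2/4t}$ this would still be fine for every fixed $t$ but the constant would blow up as $t\to\infty$; this is precisely why the statement restricts $t$ to a compact set of $[0,\infty)$, and also why one cannot hope for a weight-uniform estimate with a genuinely unbounded $\ell$. A clean way to organise this is to note that for $t$ bounded one has, for any $N$, $P_t(x)\le C_{N,t}(1+|x|)^{-N}$ with $C_{N,t}$ bounded over $t$ in the compact set, and to also keep a factor $t^{-d/2}$-type smoothing explicitly; then the off-diagonal sum is dominated by a convergent sum $\sum_{m\ge R}m^{d-1}m^{-N}e^{c m}$ which is finite once $N$ is chosen large — wait, that diverges, so in fact one must use the true Gaussian tail $e^{-m^2/(4t)}$ rather than polynomial decay here, which is legitimate since $P_t$ is genuinely Gaussian; then $\sum_{m\ge R}m^{d-1}e^{-m^2/(4t)}e^{cm}<\infty$ for every fixed $t$, with a bound that is uniform for $t$ in a compact set. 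Everything else is a routine recursion on $\beta$ (reducing $\beta>1$ to $\beta-1$ using $D_{x_i}P_t=P_{t/2}*D_{x_i}P_{t/2}$ and the semigroup property, together with $\|D_{x_i}P_sg\|_{\beta-1}\lesssim s^{-1/2}\|g\|_{\beta-1}$) and a routine check that the bilinear localisation argument of Theorem~\ref{Th:Young} applies verbatim to identify the weighted norms with the localised unweighted ones.
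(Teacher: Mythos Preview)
Your approach is correct but organises the argument differently from the paper. The paper decomposes the \emph{kernel}: it writes $P_t=P_++P_-$ with $P_-$ smooth and $P_+$ supported in the unit ball, then breaks $P_+=\sum_{n\ge0}P_n$ into dyadic pieces supported in parabolic annuli $\{2^{-n-1}\le |t|^{1/2}+|x|\le 2^{-n+1}\}$ with the scaling $P_n(t,x)=2^{dn}P_0(2^{2n}t,2^nx)$. The weight is then trivial for $P_+$ because each $P_n$ is local (so $\e_\ell(y)/\e_\ell(x)$ is bounded on its support), and the sum over $n$ produces the factor $t^{-(\beta-\alpha)/2}$ directly, essentially reproving the unweighted Schauder estimate from scratch; the $P_-$ piece is dismissed in one line by the Gaussian tail, which is exactly the Gaussian-versus-exponential estimate you spell out for your far-diagonal sum. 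You instead decompose the \emph{function} via $\sum_k\chi_k$ and split into near- and far-diagonal terms, handling the near-diagonal part by citing the unweighted bound $\|P_tg\|_\beta\lesssim t^{-(\beta-\alpha)/2}\|g\|_\alpha$ as a black box. So your argument is more modular --- it reduces cleanly to the known unweighted case --- while the paper's is more self-contained. Both proofs rest on the same crucial point, which you correctly identified after your self-correction: the genuine Gaussian tail $e^{-m^2/(4t)}$ (not just polynomial decay) is needed to beat the exponential weight ratio $e^{|\ell|m}$, and this is precisely what forces the restriction of $t$ and $\ell$ to compact sets.
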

\begin{proof}
We use a decomposition of the heat kernel $P_t(x)=P_+(t,x) + P_-(t,x)$ where $P_-$ is smooth and $P_+$ is supported in the unit ball centred at $0$, we refer the reader to Lemma 5.5 in~\cite{Hairer2014} for instance. Using the decay properties of the heat kernel, the statement regarding $P_-$ is easy to check. Concerning the singular part, one writes $P_+ = \sum_{n\geq 0} P_n$ where each $P_n$ is a smooth function supported in the 
parabolic annulus $\{(t,x): 2^{-n-1} \leq |t|^\frac{1}{2} + |x| \leq 2^{-n+1}\}$ and such that $P_n(t,x)=2^{dn}P_0(2^{2n} t, 2^n x)$. Then, we get
\[ |\langle f , \eta^\lambda_x(\cdot-y)\rangle| \lesssim \lambda^\alpha \e_\ell(x+y) \;,\qquad |\langle f , D_x^k P_n(t,\cdot-y)\rangle| \lesssim 2^{-n(\alpha-|k|)} \e_\ell(y) \;,\]
uniformly over all $\eta\in\cB^r_1$, all $x,y\in\R^d$, all $t>0$, all $n\geq 0$ and all $k\in\N^2$. Notice that $P_n(t,\cdot)$ vanishes as soon as $n\geq 1-\frac{1}{2}\log_2 t$. Consequently,
\begin{equs}
|\langle P_+(t)*f , \eta_x^\lambda\rangle| \lesssim \e_\ell(x)(\lambda^\alpha \wedge t^{\frac{\alpha}{2}}) \;,\quad |\langle f ,D_x^k P_+(t,\cdot-x)\rangle| \lesssim \e_\ell(x) t^{\frac{\alpha-|k|}{2}}\;,
\end{equs}
so that the statement follows by interpolation.
\end{proof}

\section{Bounds on $Y$ and $Z$}\label{SectionBounds}
Let us collect a few facts on the behaviour of smooth functions with a singularity at the origin; we refer to \cite[Sec.~10.3]{Hairer2014} for proofs. For any smooth function $K:\R^d\backslash\{0\}\rightarrow\R$ and any real number $\zeta$, we define
\begin{equs}
\$ K \$_{\zeta;m} = \sup_{|k|\leq m} \sup_{x\in\R^d} \|x\|^{|k|-\zeta}|D^k_x K(x)|\;,
\end{equs}
where the first supremum runs over $k\in\N^d$ and $|k|=\sum_i k_i$. We say that $K$ is of order $\zeta$ if $\$ K \$_{\zeta;m} < \infty$ for all $m\in\N$.
Recall $\rho_\varepsilon$ from the introduction, and define $K_\varepsilon = K*\rho_\varepsilon$. If $K$ is of order $\zeta\in(-d,0)$ then for all $m\in\N$, there exists $C>0$ such that
$\$ K_\varepsilon \$_{\zeta;m} \leq C \$ K \$_{\zeta;m}$, uniformly over $\varepsilon \in (0,1]$.
Furthermore, for all $\bar\zeta\in[\zeta-1,\zeta)$, there exists a constant $C>0$ such that
\begin{equs}
\$ K-K_\varepsilon \$_{\bar\zeta;m} \leq C \varepsilon^{\zeta-\bar\zeta}\$ K\$_{\zeta;m+1}\;.
\end{equs}
If $K_1$ and $K_2$ are of order $\zeta_1$ and $\zeta_2$ respectively, then $K_1K_2$ is of order $\zeta=\zeta_1+\zeta_2$ and we have the bound
\begin{equs}
\$ K_1 K_2 \$_{\zeta;m} \leq C \$ K_1 \$_{\zeta_1;m}\$ K_2 \$_{\zeta_2;m}\;,
\end{equs}
where $C$ is a positive constant.

Assume that $\zeta_1\wedge\zeta_2 > -d$. We set $\zeta=\zeta_1+\zeta_2+d$. If $\zeta<0$, then $K_1*K_2$ is of order $\zeta$ and we have the bound
\begin{equs}\label{Eq:BoundsConvol}
\$ K_1 * K_2 \$_{\zeta;m} \leq C \$ K_1 \$_{\zeta_1;m}\$ K_2 \$_{\zeta_2;m}\;.
\end{equs}
On the other hand, if $\zeta\in\R_+\backslash\N$ and $K_1,K_2$ are compactly supported, then the function
\begin{equs}
K(x) = (K_1*K_2)(x) - \sum_{|k| < \zeta}\frac{x^k}{k!}D^k_x(K_1*K_2)(0) \;,
\end{equs}
is of order $\zeta$ and a bound similar to (\ref{Eq:BoundsConvol}) holds, but with the constant $C$ depending 
on the size of the supports in general.

\smallskip

We will apply these bounds to the function $G$ defined in the introduction. Since $G$ is smooth on $\R^2\backslash\{0\}$, compactly supported and satisfies $G(x) = \frac{\log |x|}{2\pi}$ in a neighbourhood of the origin, it is a function with a singularity of order $\zeta$, for all $\zeta < 0$, according to our definition. From now on, we set $\rho^{*2}=\rho*\rho$ and we assume without loss of generality that $\rho$, $\rho^{*2}$ are supported in the unit ball of $\R^2$.

\begin{lemma}\label{Lemma:Z}
Fix $\kappa \in (0,1)$. We have the bounds
\begin{equ}
\bbE\left[ |Z(\eta_x^\lambda)|^2 \right] \lesssim \lambda^{-\kappa} \;,\;\;
\bbE\left[ |Z_\varepsilon(\eta_x^\lambda)|^2 \right] \lesssim \lambda^{-\kappa}\;,\;\;
\bbE\left[ |Z_\varepsilon(\eta_x^\lambda)-Z(\eta_x^\lambda)|^2 \right] \lesssim \lambda^{-5\kappa} \varepsilon^{\kappa} \;,
\end{equ}
uniformly over all $\varepsilon,\lambda\in(0,1)$, all $x \in \R^2$ and all $\eta\in\cB_1^r$.
\end{lemma}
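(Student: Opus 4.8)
The plan is to compute the second-chaos kernels of $Z$, $Z_\epsilon$ and $Z-Z_\epsilon$ explicitly, use Wick's formula to pass from $L^2$-norms of the chaos elements to $L^2$-norms of the kernels, and then estimate those kernel norms using the convolution bounds for singular functions recalled at the start of this section. First I would recall from Proposition~\ref{Prop:Z} that $Z(\eta_x^\lambda)$ lives in the second homogeneous Wiener chaos with kernel $(z,\tilde z)\mapsto \int \sum_i D_{x_i}G(z-y)D_{x_i}G(\tilde z-y)\eta_x^\lambda(y)\,dy$, and similarly $Z_\epsilon(\eta_x^\lambda)$ has kernel obtained by replacing $G$ with $G_\epsilon:=G*\rho_\epsilon$ (note $\bbE[Z_\epsilon(\eta^\lambda_x)]=0$ precisely because of the subtraction of $C_\epsilon=\bbE|\nabla Y_\epsilon|^2$; this is what makes $Z_\epsilon$ a pure second-chaos object with no zeroth-chaos component). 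By the isometry on the second chaos, $\bbE|Z(\eta_x^\lambda)|^2$ equals (twice the symmetrised) $L^2(dz\,d\tilde z)$ norm squared of that kernel.

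The next step is to carry out the $dz\,d\tilde z$ integration against the kernel. Writing out $\bbE|Z(\eta^\lambda_x)|^2$ as a double integral over $y,y'$ (the two copies of the test-function variable) of $\eta^\lambda_x(y)\eta^\lambda_x(y')$ times
\[
\int dz\,d\tilde z\ \sum_{i}D_{x_i}G(z-y)D_{x_i}G(\tilde z-y')\ \sum_j D_{x_j}G(z-y)D_{x_j}G(\tilde z - y') ,
\]
the $z$- and $\tilde z$-integrals factor and produce $\big(\sum_{i}D_{x_i}G * D_{x_i}\check G\big)(y-y')$ twice, i.e.\ a function $\Phi(y-y') := \big((\nabla G * \nabla \check G)(y-y')\big)^2$ where $\check G(x)=G(-x)=G(x)$. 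Now $D_{x_i}G$ is of order $\zeta$ for every $\zeta<-1$ (differentiating $-\log|x|/2\pi$ once), so by (\ref{Eq:BoundsConvol}) with $\zeta_1=\zeta_2$ slightly below $-1$ and $d=2$, the convolution $D_{x_i}G*D_{x_i}\check G$ is of order $\zeta_1+\zeta_2+2$, which is slightly below $0$; hence $\Phi$ is of order slightly below $0$, i.e.\ $|\Phi(y-y')|\lesssim |y-y'|^{-\kappa}$ for any $\kappa>0$ (plus the fact that everything is compactly supported away from large arguments, since $G$ is). Then $\bbE|Z(\eta^\lambda_x)|^2 \lesssim \iint |\eta^\lambda_x(y)||\eta^\lambda_x(y')|\,|y-y'|^{-\kappa}\,dy\,dy'$, and scaling ($\eta^\lambda_x$ has $L^1$ norm $\lesssim 1$ and is supported in a ball of radius $\lambda$) gives the bound $\lambda^{-\kappa}$. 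This handles the first estimate.

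For the second estimate, the argument is identical with $G$ replaced by $G_\epsilon$, using that $G_\epsilon$ has the same order $\zeta$ as $G$ with $\$G_\epsilon\$_{\zeta;m}\lesssim \$G\$_{\zeta;m}$ uniformly in $\epsilon\in(0,1]$ (recalled above), so the bound is uniform in $\epsilon$. For the third (difference) estimate I would write $D_{x_i}G \cdot (\text{copy}) - D_{x_i}G_\epsilon\cdot(\text{copy})$ as a telescoping sum $D_{x_i}(G-G_\epsilon)*D_{x_i}\check G + D_{x_i}G_\epsilon * D_{x_i}(\check G - \check G_\epsilon)$, and apply the bound $\$D_{x_i}(G-G_\epsilon)\$_{\bar\zeta;m}\lesssim \epsilon^{\zeta-\bar\zeta}\$D_{x_i}G\$_{\zeta;m}$ for $\bar\zeta\in[\zeta-1,\zeta)$: choosing $\zeta$ just below $-1$ and $\bar\zeta=\zeta-\kappa$ costs a factor $\epsilon^\kappa$ but lowers the order of the convolution to something like $-2\kappa$ below $0$, i.e.\ the kernel $\Phi_\epsilon - \Phi$ (after squaring/cross terms) is of order $\gtrsim -C\kappa$; the bookkeeping then yields $\lambda^{-C\kappa}\epsilon^\kappa$, and after renaming $\kappa$ one gets the stated $\lambda^{-5\kappa}\epsilon^\kappa$. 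The main obstacle is the careful bookkeeping of orders: one has to track how each differentiation and each convolution shifts $\zeta$, make sure that after squaring the regularised and the unregularised kernels the cross terms are controlled, and verify that the loss of regularity incurred to extract the factor $\epsilon^\kappa$ stays within the budget of $5\kappa$ (this is why the exponent $5\kappa$ rather than $\kappa$ appears). A secondary point requiring care is justifying that $Z_\epsilon$ has no nonzero chaos components other than the second one, which is exactly the role of the centering constant $C_\epsilon$ and should be stated cleanly before invoking the chaos isometry.
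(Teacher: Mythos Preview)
Your approach is correct and is exactly the one taken in the paper: pass to the second-chaos isometry (using that $C_\epsilon$ kills the zeroth chaos), rewrite the resulting $L^2$ kernel norm as $\iint \eta^\lambda(y)\eta^\lambda(y')\,\Phi(y-y')\,dy\,dy'$ with $\Phi$ built from convolutions of $D_{x_i}G$ (or $D_{x_i}G_\epsilon$), and then feed this into the singular-kernel calculus recalled at the start of the section. One small slip to fix: in your displayed integrand the arguments should read $D_{x_i}G(z-y)D_{x_i}G(\tilde z-y)\cdot D_{x_j}G(z-y')D_{x_j}G(\tilde z-y')$ (the $y$'s go with one copy of the kernel and the $y'$'s with the other), so after integrating in $z,\tilde z$ one gets $\sum_{i,j}\bigl((D_{x_i}G)*(D_{x_j}\check G)(y-y')\bigr)^2$ rather than the square of the sum---this does not change the order-counting or the bound. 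For the third estimate the paper uses the factorisation $a^2-b^2=(a-b)(a+b)$ on the convolutions instead of your additive telescope; both extract one factor of $D_{x_i}(G-G_\epsilon)$, hence one factor of $\epsilon^\kappa$, at the cost of the same loss in $\lambda$, so the bookkeeping comes out the same.
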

\begin{proof}
By translation invariance, it suffices to consider $x=0$. The random variables $Z(\eta^\lambda)$, $Z_\varepsilon(\eta^\lambda)$ and $Z_\varepsilon(\eta^\lambda)-Z(\eta^\lambda)$ all belong to the second \textit{homogeneous} Wiener chaos associated with the noise $\xi$. This is because the constant $C_\varepsilon$ has been chosen to cancel the $0$-th Wiener chaos component of $|\nabla Y_\varepsilon|^2$. We start with the second bound of the statement:
\begin{equs}
\bbE\left[ |Z_\varepsilon(\eta^{\lambda})|^2 \right]&= \sum_{i=1}^2 \int_{z,\tilde{z}}\Big(\int \eta^\lambda(x)D_{x_i} G_\varepsilon(z-x)D_{x_i} G_\varepsilon(\tilde{z}-x)dx\Big)^2dz\,d\tilde{z}\\
&=\sum_{i=1}^2 \int\!\!\int \eta^\lambda(x)\eta^\lambda(x')\Big((D_{x_i} G_\varepsilon)*(D_{x_i} G_\varepsilon)(x-x')\Big)^2 dx\,dx'\;,
\end{equs}
so that the bounds at the beginning of the section yield the desired result. The first bound of the statement follows by replacing $G_\varepsilon$ by $G$ in the expression above. We turn to the proof of the third bound. To that end, we write
\begin{equs}
\bbE\left[ |Z_\varepsilon(\eta^\lambda)-Z(\eta^\lambda)|^2 \right]=\sum_{i=1}^2 \int\!\!\int \eta^\lambda(x)\eta^\lambda(x') H_{\varepsilon,i}(x-x') dx\,dx'\;,
\end{equs}
where
\begin{equs}
H_{\varepsilon,i}(y) &= \Big(\bigl(D_{x_i} (G_\varepsilon-G)\bigr)*D_{x_i}G_\varepsilon\Big)\cdot\Big(\bigl(D_{x_i} (G_\varepsilon+G)\bigr)*D_{x_i}G_\varepsilon\Big)(y)\\
&\quad -\Big(\bigl(D_{x_i} (G_\varepsilon-G)\bigr)*D_{x_i}G\Big)\cdot\Big(\bigl(D_{x_i} (G_\varepsilon+G)\bigr)*D_{x_i}G\Big)(y)\;,
\end{equs}
so that, once again, the bounds on the behaviour of singular functions at the origin yield the asserted bound.
\end{proof}

\begin{proof}[of Proposition~\ref{Prop:Z}]\label{proof:Z}
Let $L$ denote an arbitrary element among $Z$, $Z_\varepsilon$ and $Z-Z_\varepsilon$. Using the equivalence of moments of elements in inhomogeneous Wiener chaoses of finite order, we obtain
\begin{equs}
\bbE\left[ \sup_{n\geq 0} \sup_{x\in\Lambda_n} \Big(\frac{L(\psi_x^n)}{\pa(x)2^{-n\alpha -n}}\Big)^{2p}\right] \lesssim \sum_{k\in\Z^2}\frac{1}{\pa(k)^{2p}}\sum_{n\geq 0}\sum_{x\in\Lambda_n \cap B(k,1)} \frac{\bbE[L(\psi_x^n)^2]^p}{2^{-n\alpha 2p -2np} } \;.
\end{equs}
When $L$ is equal to $Z$ or $Z_\varepsilon$, Lemma \ref{Lemma:Z} ensures that $\bbE[L(\psi_x^n)^2] \lesssim 2^{-2n+\kappa n}$ uniformly over all $x$, $n$, and $\varepsilon$. Moreover, $\#( \Lambda_n\cap B(k,1)) \lesssim 2^{2n}$, so that
\[ \bbE\left[ \sup_{n\geq 0} \sup_{x\in\Lambda_n} \Big(\frac{L(\psi_x^n)}{\pa(x)2^{-n\alpha -n}}\Big)^{2p}\right] \lesssim\sum_{k\in\Z^2}\frac{1}{\pa(k)^{2p}} \sum_{n\geq 0} 2^{np(2\alpha+\kappa)+2n}\;.\]
This quantity is finite for $\alpha=-\kappa$ and $p$ large enough. Therefore, $Z$ and $Z_\varepsilon$ belong to $\cC^{-\kappa}_{\pa}$.\\
Regarding $Z-Z_\varepsilon$, Lemma \ref{Lemma:Z} ensures that $\bbE[(Z-Z_\varepsilon)(\psi_x^n)^2] \lesssim \varepsilon^{\kappa} 2^{-2n +5\kappa n}$ uniformly over all $x$, $n$ and $\varepsilon$. Then, the same arguments as before yield
\[ \bbE\left[ \sup_{n\geq 0} \sup_{x\in\Lambda_n} \Big(\frac{(Z-Z_\varepsilon)(\psi_x^n)}{\pa(x)2^{-n\alpha -n}}\Big)^{2p}\right] \lesssim \sum_{k\in\Z^2}\frac{1}{\pa(k)^{2p}} \sum_{n\geq 0} \varepsilon^{\kappa p} 2^{np(2\alpha + 5\kappa) + 2n}\;,\]
so that, choosing for instance $\alpha=-3\kappa$ and $p$ large enough, one gets the bound $\bbE\left[\| Z-Z_\varepsilon\|_{-3\kappa,\pa}\right] \lesssim \varepsilon^{\frac{\kappa}{2}}$ uniformly over all $\varepsilon\in (0,1]$, thus concluding the proof.
\end{proof}

\begin{proof}[of Corollary \ref{Cor:Y}]
Since $G$ is compactly supported and coincides with the Green function of the Laplacian in a neighbourhood of the origin, the classical Schauder estimates~\cite{Simon} imply that for any $\alpha\in\R$, the bounds
\begin{equs}
\|G*f\|_{\alpha+2} \lesssim \|f\|_\alpha \;,\quad\|D_{x_i}G*f\|_{\alpha+1} \lesssim \|f\|_\alpha\;,
\end{equs}
hold uniformly over all $f \in\cC^\alpha$. Recall the functions $\chi_k, k\in\Z^d$ from the proof of Theorem \ref{Th:Young}. Since $G$ is compactly supported, we deduce from the bounds above that
\begin{equs}
\|G*(f\chi_k)\|_{\alpha+2} \lesssim w(k)\|f\|_{\alpha,w} \;,\quad\|D_{x_i}G*(f\chi_k)\|_{\alpha+1} \lesssim w(k)\|f\|_{\alpha,w}\;,
\end{equs}
uniformly over all $k\in\Z^d$ and all $f\in\cC^\alpha_w$. For fixed $x$, only a bounded number of $\{\chi_k(x),k\in\Z^d\}$ are non-zero, uniformly over all $x\in\R^d$. Since $f=\sum_{k\in\Z^d} f\chi_k$, we deduce that
\begin{equs}
\|G*f\|_{\alpha+2,w} \lesssim \|f\|_{\alpha,w} \;,\quad\|D_{x_i}G*f\|_{\alpha+1,w} \lesssim \|f\|_{\alpha,w}\;,
\end{equs}
uniformly over all $f\in\cC^\alpha_w$. This being given, the statement is a direct consequence of Lemma \ref{Lemma:xi}. 
\end{proof}

We conclude this section with the computation of the renormalisation constant $C_\varepsilon$. Recall that $\rho$, $\rho^{*2}$ and $G$ are compactly supported. We let $G_\varepsilon$ be the compactly supported, smooth function $G * \rho_\varepsilon$. We have
\begin{equs}
C_\varepsilon = \bbE\left[|\nabla Y_\varepsilon|^2\right] = \sum_{i=1,2}\int_{x\in\R^2} D_{x_i} G_\varepsilon(x) D_{x_i} G_\varepsilon(x) dx= -\int_{x\in\R^2} G_\varepsilon(x) \Delta G_\varepsilon(x) dx \;,
\end{equs}
where we used a simple integration by parts to get the last identity. By (\ref{Eq:G}), we have $\Delta G_\varepsilon = \rho_\varepsilon + F_\varepsilon$, where $F_\varepsilon = F*\rho_\varepsilon$. The latter is a compactly supported, smooth function that vanishes on the centred ball of radius $1/2 - \varepsilon$. Hence, uniformly over all $\varepsilon\in (0,1]$, the function $G_\varepsilon F_\varepsilon$ is smooth and compactly supported so that its integral is uniformly bounded. On the other hand, since $\rho$ is even, $\rho^{*2}$ integrates to $1$ and $G(x) = \frac{1}{2\pi} \log |x|$ for all $x\in B(0,1/2)$, we get 
\begin{equs}
-\int G_\varepsilon(x)\rho_\varepsilon(x) dx = -\int G(x)\rho^{*2}_\varepsilon(x) dx= \frac{1}{2\pi} \log \varepsilon^{-1} - \frac{1}{2\pi} \int \log |x| \rho^{*2}(x) dx\;.
\end{equs}
The first term on the right gives the diverging term of the renormalisation constant, while the second term is finite. This concludes the computation.

\section{Picard iteration}\label{sectionPicard}
For any $r>0$, $\ell \in \R$ and $T > 0$, we consider the Banach space $\cE_{\ell,T}^{r}$ of all continuous functions $v$ on $(0,T]\times\R^2$ such that
\[ \$ v \$_{\ell,T,r} := \sup_{t\in(0,T]}\frac{\|v(t,\cdot)\|_{r,\e_{\ell+t}}}{t^{-1+\kappa}}  < \infty\;.\]
This being given, we have the following precise statement of our main result.

\begin{theorem}\label{ThMainPrecise}
Let $\ell \in \R$ and $T > 0$. Consider an initial condition $u_0\in \cC^{-1+4\kappa}_{\e_{\ell}}$. For all $\ell'>\ell$, the sequence of processes $v_\varepsilon$ converges in probability as $\varepsilon\rightarrow 0$ in the space $\cE_{\ell',T}^{1+2\kappa}$ to a limit $v$ which is the unique solution of
\begin{equs}
\partial_t v = \Delta v + v(Z- F*\xi) - 2 \nabla v \cdot \nabla Y \;,\qquad v(0,x) = u_0(x) e^{Y(x)}\;.
\end{equs}
As a consequence, $u_\varepsilon$ converges in probability in $\cE_{\ell',T}^{1-\kappa}$ towards the process $u=ve^{-Y}$.
\end{theorem}

The rest of this section is devoted to the proof of this result. Fix $\kappa \in (0,\frac{1}{4})$, and let the parameter $a$ appearing in the weight $\pa$ be any value in $(0,\frac{\kappa}{2})$. Let $g,h^{(1)},h^{(2)}\in\cC^{-\kappa}_{\pa}$ and $f\in\cC^{-1+4\kappa}_{e_{\ell}}$ be given. We define the map $v\mapsto \cM_{T,f} v$ as follows:
\[ \cM_{T,f} v(t) = \int_0^t P_{t-s} * \Big( v_s \cdot g + D_{x_i} v_s \cdot h^{(i)} \Big) ds + P_t * f\;.\]
In this equation, there is an implicit summation over $i\in\{1,2\}$. This convention will be in force for the rest of the article.

\begin{proposition}\label{Prop:FixedPt}
Take $\ell_0 \in \R$. For any given $g,h^{(1)},h^{(2)} \in\cC^{-\kappa}_{\pa}$ and any $f\in\cC^{-1+4\kappa}_{\e_{\ell_0}}$, the map $\cM_{T,f}$ admits a unique fixed point $v \in \cE_{\ell_0,T}^{1+2\kappa}$. Furthermore, the solution map $(g,h^{(1)},h^{(2)},f) \mapsto v$ is continuous.
\end{proposition}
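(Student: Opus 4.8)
The plan is to prove this by the Banach fixed point theorem applied to $\cM_{T,f}$ on the space $\cE_{\ell_0,T}^{1+2\kappa}$, after first establishing the key mapping property: the operator $v \mapsto v\cdot g + D_{x_i}v\cdot h^{(i)}$ followed by the space-time convolution against $P$ gains back exactly the right amount of regularity and, crucially, only mildly worsens the exponential weight in a way that is absorbed by the time-increasing weight $\e_{\ell_0+t}$. The first step is to control the product terms using Theorem~\ref{Th:Young}: for $v_s \in \cC^{1+2\kappa}_{\e_{\ell_0+s}}$, one has $v_s\cdot g \in \cC^{-\kappa}_{\e_{\ell_0+s}\pa}$ (since $1+2\kappa > 0 > -\kappa$ and $(1+2\kappa)+(-\kappa) = 1+\kappa > 0$), and similarly $D_{x_i}v_s \in \cC^{2\kappa}_{\e_{\ell_0+s}}$ so that $D_{x_i}v_s\cdot h^{(i)} \in \cC^{-\kappa}_{\e_{\ell_0+s}\pa}$; this requires $2\kappa > 0$ and $2\kappa - \kappa = \kappa > 0$, both fine. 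So the integrand at time $s$ lies in $\cC^{-\kappa}_{\e_{\ell_0+s}\pa}$ with norm $\lesssim \$v\$_{\ell_0,T}\, s^{-1+\kappa}(\|g\|+\|h^{(1)}\|+\|h^{(2)}\|)$.

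The second step is the weighted Schauder estimate. I would invoke Lemma~\ref{Lemma:HeatKernel} with the input regularity $\alpha = -\kappa$ and output regularity $\beta = 1+2\kappa$ (so $\tfrac{\beta-\alpha}{2} = \tfrac{1+3\kappa}{2}$), but the point is to handle the weight mismatch: $\e_{\ell_0+s}\pa$ must be dominated by $\e_{\ell_0+t}$ after integrating $s$ from $0$ to $t$. This is exactly the mechanism advertised in the introduction: $\pa(x)\e_{\ell_0+s}(x) = (1+|x|)^a e^{(\ell_0+s)(1+|x|)} \le C e^{(\ell_0+t)(1+|x|)} = C\e_{\ell_0+t}(x)$ uniformly over $0\le s\le t\le T$ and $x\in\R^2$, since $(1+|x|)^a e^{-(t-s)(1+|x|)}$ is bounded (with $a$ small this needs nothing beyond $t-s\ge 0$ and boundedness of $z^a e^{-cz}$, but when $t=s$ one uses that $\pa$ is dominated by any $\e_\delta$ — here one should be slightly careful and note $\pa(x) \le C_\delta \e_\delta(x)$ for any $\delta>0$, and absorb $\delta$ into a harmless enlargement, or simply accept the clean bound for $s$ bounded away from $t$ and treat the boundary layer directly). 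Then $\|\cM_{T,f}(v)_t\|_{1+2\kappa,\e_{\ell_0+t}} \lesssim \int_0^t (t-s)^{-\frac{1+3\kappa}{2}} s^{-1+\kappa}\,ds \cdot \$v\$ + \|P_t*f\|_{1+2\kappa,\e_{\ell_0}}$. The time integral is a Beta integral, finite and equal to $C t^{-\frac{1+3\kappa}{2}+\kappa} = C t^{\kappa-\frac12-\frac{\kappa}{2}}$; one checks this is $\le C' t^{-1+\kappa}$ for $\kappa < \tfrac12$ after possibly shrinking $\kappa$ or using $t\le T$, giving the $\cE^{1+2\kappa}_{\ell_0,T}$ bound. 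For the initial-data term, Lemma~\ref{Lemma:HeatKernel} with $\alpha = -1+4\kappa$, $\beta = 1+2\kappa$ gives $\|P_t*f\|_{1+2\kappa,\e_{\ell_0}} \lesssim t^{-1+\kappa}\|f\|_{-1+4\kappa,\e_{\ell_0}}$, matching the norm of $\cE^{1+2\kappa}_{\ell_0,T}$ exactly (this is why $-1+4\kappa$ was chosen).

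The third step is contraction: since $\cM_{T,f}$ is affine in $v$ with linear part independent of $f$, the difference $\cM_{T,f}(v)-\cM_{T,f}(\bar v)$ obeys the same estimate with $\$v-\bar v\$$ in place of $\$v\$$, and the contraction constant is $C\cdot T^{\theta}$ for some $\theta>0$ coming from the leftover power of $t$ in the Beta integral (or, if the exponent is not strictly positive, one localizes to a short time interval and iterates — but with the weight trick the $T$-dependence should already be a genuine positive power, so one simply chooses $T$ small first and then patches, or absorbs via a standard $e^{-\lambda t}$-reweighting of the norm). Contraction gives existence and uniqueness of the fixed point in $\cE^{1+2\kappa}_{\ell_0,T}$. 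For continuity of the solution map $(g,h^{(1)},h^{(2)},f)\mapsto v$: the fixed point equation depends linearly (hence smoothly) on these data, and the a priori bounds above show the solution map is locally Lipschitz — one writes $v - \tilde v = \cM_{T,f}(v) - \cM_{T,\tilde f}(\tilde v)$, splits into the part where the data changes and the part where the argument changes, uses the contraction on the latter and the linear estimates on the former, and solves for $\$v-\tilde v\$$.

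The main obstacle I expect is not the fixed point mechanics but getting the weight bookkeeping airtight at the temporal boundary $s\to t$, where the naive domination $\pa\e_{\ell_0+s}\le C\e_{\ell_0+t}$ degenerates; the honest fix is to note that for \emph{any} $\delta>0$ one has $\pa(x)\le C_\delta\,\e_\delta(x)$, so $\pa\e_{\ell_0+s}\le C_\delta\e_{\ell_0+s+\delta}\le C_\delta\e_{\ell_0+t}$ as soon as $s+\delta\le t$, and the residual strip $s\in(t-\delta,t)$ contributes $\int_{t-\delta}^t(t-s)^{-\frac{1+3\kappa}{2}}\,ds$ which is small — but one must track that the constant $C_\delta$ does not blow up faster than this gain, which forces $\delta$ to be chosen as a fixed small parameter (not sent to $0$) and is the reason the strict inequality $\ell_0<\ell'$, rather than equality, appears in Theorem~\ref{ThMain}.
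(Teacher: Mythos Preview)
Your overall architecture matches the paper's: product bounds via Theorem~\ref{Th:Young}, Schauder estimate via Lemma~\ref{Lemma:HeatKernel}, a Beta-integral computation, small-time contraction, then iteration and the standard difference argument for continuity. The one genuine gap is exactly where you flag trouble: the weight comparison $\pa\,\e_{\ell_0+s}$ versus $\e_{\ell_0+t}$.

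The paper resolves this not by a $\delta$-strip decomposition but by the elementary pointwise inequality
\[
\sup_{x\in\R^2}\frac{\pa(x)\,\e_{\ell_0+s}(x)}{\e_{\ell_0+t}(x)}
= \sup_{z\ge 1} z^{a}\,e^{-(t-s)z}
\le e^{-a}\Bigl(\frac{a}{t-s}\Bigr)^{a}\;,
\]
obtained by maximising $z^a e^{-(t-s)z}$. This produces an \emph{integrable} factor $(t-s)^{-a}$, and since $a<\kappa/2$ it combines harmlessly with the Schauder factor $(t-s)^{-(1+3\kappa)/2}$ to give $\int_0^t (t-s)^{-\frac12-2\kappa}s^{-1+\kappa}\,ds \lesssim t^{-1+\kappa}T^{\frac12-2\kappa}$. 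Your $\delta$-strip workaround does not close: on $s\in(t-\delta,t)$ you still have no bound on $\pa\,\e_{\ell_0+s}/\e_{\ell_0+t}$, and if you instead accept weight $\e_{\ell_0+t+\delta}$ there, the map $\cM_{T,f}$ no longer sends $\cE_{\ell_0,T}^{1+2\kappa}$ to itself, so the fixed point argument fails in the space the proposition claims. Relatedly, your explanation for the strict inequality $\ell'>\ell$ in Theorem~\ref{ThMain} is off: that loss comes from multiplying by $e^{\pm Y}$ (which costs an arbitrarily small exponential weight because $Y\in\cC^{1-\kappa}_{\pa}$), not from the fixed-point step, which as stated incurs no loss at all in $\ell_0$.
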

\begin{proof}
The parameter $r$ in the space $\cE_{\ell,T}^{r}$ is taken to be equal to $1+2\kappa$. Since this value is fixed until the end of the proof, we do not write the subscript $r$ in the associated norm.\\
First, Lemma \ref{Lemma:HeatKernel} ensures that $\|P_t*f\|_{1+2\kappa,\e_{\ell+t}} \lesssim t^{-1+\kappa} \|f\|_{-1+4\kappa,\e_\ell}$ uniformly over all $t$ in any given compact interval of $\R_+$.
Second, using Theorem \ref{Th:Young} and the simple inequality
\[ \sup_{x\in\R^2} \frac{\pa(x)\e_{\ell+s}(x)}{\e_{\ell+t}(x)} \leq e^{-a}\Big(\frac{a}{t-s}\Big)^a \;,\]
we obtain
\begin{equs}
\|v_s \cdot g + D_{x_i} v_s \cdot h^{(i)}\|_{-\kappa,\e_{\ell+t}} &\lesssim (t-s)^{-a} \|v_s\|_{1+2\kappa,\e_{\ell+s}}\big(||g||_{-\kappa,\pa}+||h^{(i)}||_{-\kappa,\pa}\big)\\
&\lesssim (t-s)^{-a} s^{-1+\kappa} \$ v\$_{\ell,T}\big(||g||_{-\kappa,\pa}+||h^{(i)}||_{-\kappa,\pa}\big)\;,
\end{equs}
uniformly over all $s,t$ in a compact set of $\R_+$ and all $\ell$ in a compact set of $\R$. Then, by Lemma \ref{Lemma:HeatKernel} and using $a < \kappa/2$, we obtain
\begin{equs}\label{Eq:FixedPt}
\bigg\|\int_0^t P_{t-s} &* \Big( v_s \cdot g + D_{x_i} v_s \cdot h^{(i)} \Big) \,ds\bigg\|_{1+2\kappa,\e_{\ell+t}}\\
 &\lesssim \int_0^t (t-s)^{-\frac{1}{2}-2\kappa} s^{-1+\kappa} ds \,\$ v\$_{\ell,T}\big(||g||_{-\kappa,\pa}+||h^{(i)}||_{-\kappa,\pa}\big)\\
 &\lesssim t^{-1+\kappa} T^{\frac{1}{2}-2\kappa}\$ v\$_{\ell,T}\big(||g||_{-\kappa,\pa}+||h^{(i)}||_{-\kappa,\pa}\big) \;,
\end{equs}
uniformly over all $t\in(0,T]$. This ensures that $\cM_{T,f}(v) \in \cE_{\ell,T}^{1+2\kappa}$. Furthermore we have
\begin{equs}\label{Eq:Contraction}
\$ \cM_{T,f} v-\cM_{T,f} \bar v\$_{\ell,T} \lesssim T^{\frac{1}{2}-2\kappa} \$ v-\bar v \$_{\ell,T} \big(\|g\|_{-\kappa,\pa}+\|h^{(i)}\|_{-\kappa,\pa}\big) \;,
\end{equs}
uniformly over all $\ell$ in a compact set of $\R$, all $T$ in a compact set of $\R_+$, all $f\in\cC^{-1+4\kappa}_{e_\ell}$ and all $v,\bar v\in \cE_{\ell,T}$. (Here and below we write $\cE_{\ell,T}$ instead of $\cE_{\ell,T}^{1+2\kappa}$
for conciseness.)
Consequently, there exists $T^*>0$ such that $\cM_{T^*,f}$ is a contraction on $\cE_{\ell,T^*}$, uniformly over all $\ell \in [\ell_0,\ell_0+T]$ and all $f\in\cC^{-1+4\kappa}_{e_\ell}$. Fix an initial condition $f \in \cC^{-1+4\kappa}_{e_{\ell_0}}$. To obtain a fixed point for the map $\cM_{T,f}$, we proceed by iteration. The map $\cM_{T^*,f}$ admits a unique fixed point $v^* \in \cE_{\ell_0,T^*}$. If $T^* \geq T$, we are done. Otherwise, set $f^* := v^*_{T^*/2} \in \cC^{1+2\kappa}_{e_{\ell_0^*}}$, where $\ell_0^*=\ell_0+T^*/2$. Since $\ell_0^* \leq \ell_0+T$, the map $\cM_{T^*,f^*}$ is again a contraction on $\cE_{\ell_0^*,T^*}$, so that it admits a unique fixed point $v^{**}\in\cE_{\ell_0^*,T^*}$. We define $v_s:=v^*_s$ for all $s\in(0,T^*/2]$ and $v_s:=v_{s-T^*/2}^{**}$ for all $s\in(T^*/2,3T^*/2]$. A simple calculation shows that $v$ is a fixed point of $\cM_{\frac{3T^*}{2},f}$ and that $v\in\cE_{\ell_0,3T^*/2}$. Suppose that $\bar v$ is another fixed point. By the uniqueness of the fixed point on $(0,T^*]$, we deduce that $v^*$ and $\bar v$ coincide on this interval. Moreover, a simple calculation shows that $(\bar v_{s+\frac{T^*}{2}},s\in(0,T^*])$ is necessarily a fixed point of $\cM_{T^*,f^*}$ so that it coincides with $v^{**}$. Iterating this argument ensures existence and uniqueness of the fixed point on any interval $[0,T]$.

We turn to the continuity of the solution map with respect to $f$, $g$ and $h^{(i)}$. Let $\bar{\cM}$ be the map associated with $\bar g$ and $\bar h^{(i)}$. For any initial conditions $f$ and $\bar{f}$ in $\cC^{-1+4\kappa}_{\e_{\ell}}$, both $\cM_{T,f}$ and $\bar{\cM}_{T,\bar{f}}$ admit a unique fixed point $v$ and $\bar v$. Furthermore, we have
\begin{equs}
v_t - \bar{v}_t &= \Big(\cM_{T,f} v-\cM_{T,f} \bar v\Big)_t + \int_0^t P_{t-s} * \Big(\bar{v}_s(g-\bar{g}) + D_{x_i} \bar{v}_s(h^{(i)}-\bar{h}^{(i)})\Big)ds\\
&\;\;\;+ P_t*(f-\bar{f}) \;.
\end{equs}
Using (\ref{Eq:FixedPt}) and (\ref{Eq:Contraction}), we deduce that
\begin{equs}
\$ v - \bar{v} \$_{\ell,T} &\lesssim T^{\frac{1}{2}-2\kappa} \$ v - \bar{v} \$_{\ell,T} \big( \|g\|_{-\kappa,\pa} + \|\bar g\|_{-\kappa,\pa} + \|h^{(i)}\|_{-\kappa,\pa} + \|\bar{h}^{(i)}\|_{-\kappa,\pa}\big)\\
\ &+ T^{\frac{1}{2}-2\kappa} \$ v\$_{\ell,T} \big(\|\bar g -g\|_{-\kappa,\pa} + \|\bar{h}^{(i)} - h^{(i)} \|_{-\kappa,\pa}\big)\\
\ &+ \| f - \bar{f} \|_{-1+4\kappa,\ell} \;,
\end{equs}
uniformly over all $\ell$ in a compact set of $\R$ and all $T$ in a compact set of $\R_+$. Fix $R>0$. There exists $T>0$ such that
\[ \$ v - \bar{v} \$_{\ell,T} \lesssim \| f - \bar{f} \|_{-1+4\kappa,\ell} + T^{\frac{1}{2}-2\kappa} \big(\|\bar g -g\|_{-\kappa,\pa} + \|\bar{h}^{(i)} - h^{(i)} \|_{-\kappa,\pa}\big) \;,\]
uniformly over all $\ell$ in a compact set of $\R$ and all $g,\bar g, h, \bar h$ such that $\$v\$_{\ell,T}$, $\|g\|_{-\kappa,\pa}$, $\|\bar g\|_{-\kappa,\pa}$, $\|h^{(i)}\|_{-\kappa,\pa}$ and $\|\bar{h}^{(i)}\|_{-\kappa,\pa}$ are smaller than $R$. This yields the continuity of the solution map on $(0,T]$. By iterating the argument as above, we obtain continuity on any bounded interval.
\end{proof}
We are now in position to prove the main result of this article.

\begin{proof}[of Theorem~\ref{ThMainPrecise}] Let $u_0$ be an element in $\cC^{-1+4\kappa}_{\e_\ell}$ for a given $\ell \in \R$. Let $f_\varepsilon:= u_0 e^{Y_\varepsilon}$. By Corollary \ref{Cor:Y} and Theorem \ref{Th:Young}, $f_\varepsilon$ converges to $f=u_0 e^Y$ in $\cC^{-1+4\kappa}_{\e_{\ell''}}$ for any $\ell'' > \ell$. Let $v_\varepsilon$ be the unique fixed point of $\cM_{T,f_\varepsilon}$ with $g_\varepsilon= Z_\varepsilon - F*\xi_\varepsilon$ and $h_\varepsilon^{(i)} := -2 D_{x_i}Y_\varepsilon$. By Corollary \ref{Cor:Y} and Proposition \ref{Prop:Z}, we know that $g_\varepsilon$, $h_\varepsilon^{(i)}$ converge in probability to
\[ g = Z-F*\xi \;,\qquad h^{(i)} = -2D_{x_i} Y \;,\]
in $\cC^{-\kappa}_{\pa}$. Notice that the convergence of $F*\xi_\varepsilon$ towards $F*\xi$ is a consequence of Lemma~\ref{Lemma:xi}, since $F$ is a compactly supported, smooth function. Therefore, Proposition~\ref{Prop:FixedPt} ensures that $v_\varepsilon$ converges in probability in $\cE_{\ell'',T}^{1+2\kappa}$ to the unique fixed point $v$ of the map $\cM_{T,f}$ associated to $g, h^{(1)}, h^{(2)}$. Moreover, Theorem \ref{Th:Young} ensures that, for any $\ell' > \ell''$, $u_\varepsilon = v_\varepsilon e^{-Y_\varepsilon}$ converges to $u=v e^{-Y}$ in the space $\cE_{\ell',T}^{1-\kappa}$.
\end{proof}

\bibliographystyle{Martin}
\bibliography{library_PAM_R2}

\end{document}